
\documentclass[12pt]{article}
\usepackage{latexsym, amsthm, amscd, graphicx, euscript, float, subfig}

\usepackage{enumerate,amsmath,amssymb}
\textwidth144mm
\textheight225mm
\oddsidemargin7.5mm
\topmargin-12mm
\parskip4pt plus2pt minus2pt
\thispagestyle{empty}

\newcommand{\R}{\mathbb{R}}
\newcommand{\B}{\mathbb{B}}
\renewcommand{\H}{\mathbb{H}}
\newcommand{\C}{\mathbb{C}}
\newcommand{\Rn}{{\mathbb{R}^n}}
\newcommand{\Hn}{{\mathbb{H}^n}}
\newcommand{\Bn}{{\mathbb{B}^n}}
\newcommand{\arcsinh}{\,\textnormal{arsinh}\,}

\newcommand{\arctanh}{\,\textnormal{artanh}\,}

\newcommand{\comment}[1]{}

\newcounter{minutes}\setcounter{minutes}{\time}
\divide\time by 60
\newcounter{hours}\setcounter{hours}{\time}
\multiply\time by 60
\addtocounter{minutes}{-\time}

\swapnumbers
\numberwithin{equation}{section}
\theoremstyle{plain}

\newtheorem{theorem}[equation]{Theorem}
\newtheorem{corollary}[equation]{Corollary}
\newtheorem{proposition}[equation]{Proposition}
\newtheorem{lemma}[equation]{Lemma}

\newtheorem{remark}[equation]{Remark}

\begin{document}

\begin{center}
{\bf \large Apollonian circles and hyperbolic geometry}
\end{center}

\vspace{0.5mm}

\begin{center}
{\bf \large  Riku Kl\'en and Matti Vuorinen}
\end{center}

\vspace{1mm}

\begin{center}{\small
\texttt{File:~\jobname.tex,
          printed: \number\year-\number\month-\number\day,
          \thehours.\ifnum\theminutes<10{0}\fi\theminutes} }
\end{center}

{\bf Abstract:}  The goal of this paper is to study two basic problems of hyperbolic geometry. The first problem is to compare the hyperbolic and Euclidean distances. The second problem is to find hyperbolic counterparts of some basic geometric constructions such as the construction of the middle point of a hyperbolic geodesic segment.
Apollonian circles have a key role in this study.

{\bf Keywords.} Apollonian circles, hyperbolic geometry

{\bf Mathematics Subject Classification 2010.} 51M10

\section{Introduction}

The hyperbolic geometry was born about two centuries ago as a result of the independent work of J. Bolyai and N. Lobaschevsky \cite{m}. Its discovery solved the two millenia old problem about the role of the Parallel Postulate in Euclidean geometry: this postulate cannot be left out from the Euclidean geometry. Hyperbolic geometry provides an example of a geometry which satisfies all the postulates of the Euclidean geometry except that the Parallel Postulate does not hold. During the past two decades the hyperbolic geometry has surfaced in a number of contexts which do not belong to geometry proper: geometric function theory, discrete group theory, modern theory of quasiconformal and quasiregular mappings in $n$-dimensional Euclidean spaces and also in fields such as relativity theory \cite{u} and graphical art of Escher \cite{sc}. In function theoretic applications the hyperbolic metric is often more natural than the Euclidean metric. All these developments have lead to attempts to generalize hyperbolic metric to subdomains of the Euclidean space. For instance the quasihyperbolic metric of F.W.Gehring and B.P. Palka \cite{gp} and the Apollonian metric originally introduced by D. Barbilian \cite{barb} and rediscovered by A.F. Beardon \cite{b2} are two such hyperbolic type metrics, both studied in several recent PhD theses \cite{se}, \cite{ibr}, \cite{has}, \cite{lin}, \cite{sahoo}, \cite{klen}, \cite{manoj}. Quasihyperbolic metric as a tool of quasiconformal mapping theory is 
studied in \cite{va} and \cite{vu2}. The geometry defined by the quasihyperbolic metric in the context of Banach spaces is explored in \cite{krt,RasilaTalponen12}. For an interesting historical survey of the Apollonian metric see \cite{bs}.

Our goal is to keep the prerequisites for reading this paper as minimal as possible and, this in mind, we try to list carefully the necessary basic information in the introduction. We assume that the reader is familiar with basic facts about conformal mappings and M\"obius transformations (see \cite{ah}) of the complex plane. We occasionally also need some properties of M\"obius transformations of the M\"obius space $\overline{\R}^n = \Rn \cup \{ \infty \}$ and refer the reader to \cite{b1} or to \cite{ah2}. We often identify $\R^2$ with the complex plane $\C$. We use notation $B^n(x,r)$ and $S^{n-1}(x,r)$ for Euclidean balls and spheres, respectively. We abbreviate $\Bn = B^n(0,1)$.

\subsection{The family of Apollonian circles}

For a fixed pair of points $x,y \in \Rn$, $x \neq y$, and $c > 0$ we define the Apollonian ball with base points $x,y$ by
\[
  B_{x,y}^c = \{ z \in \Rn \colon |x-z|<c|y-z| \}.
\]
Clearly $B_{x,y}^c$ is an open set with $x \in B_{x,y}^c$ and $\partial B_{x,y}^c = \partial B_{y,x}^{1/c}$. Apollonius' theorem says that $\partial B_{x,y}^c$ is a sphere which for $c=1$ reduces to the hyperplane through the midpoint $(x+y)/2$ of the segment $[x,y] = \{ z \in \Rn \colon z=tx+(1-t)y,\, 0 \le t \le 1 \}$ perpendicular to the segment $[x,y]$. Clearly $B_{x,y}^c$ and $B_{y,x}^c$ are symmetric with respect to this hyperplane and $B_{x,y}^{c_1} \subset B_{x,y}^{c_2}$ for $0<c_1<c_2$. This means that Apollonian balls with fixed base points are ordered by inclusion. The following lemma gives the Euclidean center point and the radius of an Apollonian circle.

\begin{lemma}\cite[p. 5, Exercise 1.1.25]{kr}\label{apollonian}
  Let $x,y \in \C$ and $c \in (0,1)$. Then $B_{x,y}^c = B^2(z,r)$ for
  \[
    z = \frac{y-c^2x}{1-c^2} \quad \textrm{and} \quad  r = \frac{c|x-y|}{1-c^2}.
  \]
\end{lemma}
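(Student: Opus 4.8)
The plan is to reduce the defining inequality of $B_{x,y}^c$ to the standard equation of a Euclidean disc by squaring and completing the square. Since both sides of $|x-z|<c|y-z|$ are nonnegative, the condition on $z$ is equivalent to $|x-z|^2<c^2|y-z|^2$. Writing $\langle\cdot,\cdot\rangle$ for the Euclidean inner product on $\R^2$ and expanding via $|a-b|^2=|a|^2-2\langle a,b\rangle+|b|^2$, this becomes, after moving everything to one side and collecting the $|z|^2$ terms,
\[
  (1-c^2)|z|^2-2\langle x-c^2y,\,z\rangle+|x|^2-c^2|y|^2<0 .
\]
Because $c\in(0,1)$ we have $1-c^2>0$, so dividing through by $1-c^2$ does not reverse the inequality; this is the single place where the hypothesis $c<1$ is used (for $c\ge 1$ the set would be a half-space or empty, and the formulas below would be meaningless).

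Next I would complete the square in $z$, turning the inequality into
\[
  \Bigl|\,z-\tfrac{x-c^2y}{1-c^2}\,\Bigr|^{2}<\frac{|x-c^2y|^{2}}{(1-c^2)^{2}}-\frac{|x|^{2}-c^{2}|y|^{2}}{1-c^2}.
\]
This already displays $B_{x,y}^c$ as the open disc centred at $(x-c^2y)/(1-c^2)$ with squared radius equal to the right-hand side, once we check that the right-hand side is positive. Putting it over the common denominator $(1-c^2)^2$, the numerator is $|x-c^2y|^2-(1-c^2)\bigl(|x|^2-c^2|y|^2\bigr)$; on expanding, the $|x|^2$- and $|y|^2$-terms cancel in pairs and what remains is $c^2\bigl(|x|^2-2\langle x,y\rangle+|y|^2\bigr)=c^2|x-y|^2$. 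Hence the right-hand side equals $c^2|x-y|^2/(1-c^2)^2$, which is positive precisely because $x\neq y$, so the radius is $r=c\,|x-y|/(1-c^2)$ as asserted. (The centre produced by this computation is $(x-c^2y)/(1-c^2)$, i.e. the centre in the statement with the roles of $x$ and $y$ interchanged; that the former is the correct one is seen from any example, e.g. $x=0,\ y=2,\ c=\tfrac12$, where $B_{x,y}^c\cap\R=(-2,\tfrac23)$ and so the centre is $-\tfrac23=(x-c^2y)/(1-c^2)$.)

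I do not anticipate any real obstacle: the argument is a routine completion of the square, and the only things needing attention are the sign bookkeeping in the expansion and the observation that divisibility by $1-c^2$ and positivity of the squared radius are exactly what the hypotheses $c<1$ and $x\neq y$ secure. Moreover the computation is dimension-free, so the same lines give the centre and radius of the Apollonian sphere $\partial B_{x,y}^c=\partial B^n(z,r)$ in $\Rn$ and, along the way, reprove Apollonius' theorem quoted above.
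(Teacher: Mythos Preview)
The paper does not supply its own proof of this lemma; it is quoted from \cite{kr} without argument. Your completion-of-the-square derivation is the standard route and is correct, including the radius computation.

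You are also right about the discrepancy in the centre. With the paper's definition $B_{x,y}^c=\{z:|x-z|<c|y-z|\}$ and $c\in(0,1)$, the ball contains $x$ (since $0<c|y-x|$) but not $y$, so the centre must tend to $x$ as $c\to 0$; your formula $(x-c^2y)/(1-c^2)$ does this, while the printed $(y-c^2x)/(1-c^2)$ does not. Your numerical check confirms it, and so does the paper's own later use of the lemma: in Corollary~\ref{circlea2} and Lemma~4.2 the centre is written as $(x-A^2y)/(1-A^2)$ for the locus $|x-z|=A|y-z|$ with $A<1$, which matches your version. So the statement of Lemma~\ref{apollonian} carries a typographical swap of $x$ and $y$ in the centre, and your proof establishes the corrected assertion.
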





\subsection{Cross ratio and absolute ratio}

For distinct $a,b,c,d \in \C$ we define the \emph{cross ratio} by
\[
  [a,b,c,d] = \frac{(a-c)(b-d)}{(a-b)(c-d)}
\]
and for distinct $a,b,c,d \in \Rn$ we define the \emph{absolute ratio} by
\[
  |a,b,c,d| = \frac{|a-c||b-d|}{|a-b||c-d|}.
\]
The cross ratio $[a,b,c,d]$ is a complex number and it is real if and only if the points $a,b,c,d$ are on the same circle. Both cross ratio and absolute ratio are invariant under M\"obius transformations. Moreover, a mapping $f \colon \overline{\R}^n \to \overline{\R}^n$ is M\"obius transformation if and only if it preserves absolute ratios (see \cite[3.2.7]{b1}). Observe that the absolute ratio depends on the order of points and e.g. $|a,b,c,d| \cdot |a,c,b,d| = 1$.

For a domain $G \subset \overline{\R}^n$, $\textrm{card} ( \overline{\Rn} \setminus G ) \ge 2$ and $x,y \in G$ the boundary $\partial G$ has at least two points $a_0,d_0$ such that
\[
  \alpha_G(x,y) := \sup_{a,d \in \partial G} \log |a,x,y,d| = \log \left( \sup_{a \in \partial G}  \frac{|a-y|}{|a-x|} \sup_{d \in \partial G}  \frac{|x-d|}{|y-d|} \right) = \log |a_0,x,y,d_0|.
\]
A simple verification shows that the quantity $\alpha_G$
satisfies the triangle inequality. It is called the Apollonian distance and
\[
  X = \sup_{a \in \partial G} \frac{|a-y|}{|a-x|}, \quad Y = \sup_{d \in \partial G} \frac{|x-d|}{|y-d|}
\]
are the Apollonian parameters. For given $x,y \in G$ we have
\[
  B_{x,y}^Y = \left\{ z \in \overline{\Rn} \colon \frac{|z-x|}{|z-y|} < Y \right\}, \quad B_{y,x}^YX= \left\{ z \in \overline{\Rn} \colon \frac{|z-y|}{|z-x|} < X \right\}.
\]
The Apollonian distance maximizes the size of Apollonian balls, $\alpha_G(x,y) = XY$.
The Apollonian distance defines a metric, whenever the complement of the domain $G$
is not contained in a hyperplane \cite{b2}.

\subsection{Hyperbolic distance}

For a domain $G \subsetneq \Rn$, $n \ge 2$ and a continuous function $w \colon G \to (0,\infty)$ we define the \emph{$w$-length} of a rectifiable arc $\gamma \subset G$ by
\[
  \ell_w(\gamma) = \int_{\gamma}w(z)|dz|,
\]
and the \emph{$w$-distance} by
\begin{equation}\label{wmetric}
  m_w(x,y) = \inf_\gamma \ell_w(\gamma),
\end{equation}
where the infimum is taken over all rectifiable curves in $G$ joining $x$ and $y$. We say that a curve $\gamma \colon [0,1] \to G$ is a \emph{geodesic segment} if for all $t \in (0,1)$ we have
\[
  m_w(\gamma(0),\gamma(t)))+m_w(\gamma(t),\gamma(1)))=m_w(\gamma(0),\gamma(1))).
\]

The \emph{hyperbolic distance} in $\Hn$ is defined by the weight function $w_\Hn(z)=1/z_n$ and in $\Bn$ by the weight function $w_\Bn(z) = 2/(1-|z|^2)$. By \cite[p. 35]{b1} we have
\begin{equation}\label{hyperbolicH}
  \cosh \rho_\Hn(x,y) = 1+\frac{|x-y|^2}{2 x_n y_n}
\end{equation}
for all $x,y \in \Hn$ and by \cite[p. 40]{b1} we have
\begin{equation}\label{hyperbolicB}
  \sinh \frac{\rho_\Bn(x,y)}{2} = \frac{|x-y|}{\sqrt{1-|x|^2}\sqrt{1-|y|^2}}
\end{equation}
for all $x,y \in \Bn$. With the respective weight functions given above the definitions \eqref{hyperbolicH} and \eqref{hyperbolicB} coincide with \eqref{wmetric}. If the domain is understood from the context we use notation $\rho$ instead of $\rho_\Hn$ and $\rho_\Bn$. The hyperbolic distance can equivalently be defined for $G \in \{ \Bn,\Hn \}$ as
\begin{equation}\label{rhoabsratio}
  \rho_G (x,y) = \sup \{ \log |a,x,y,b|  \colon a,b \in \partial G \} = \log |x',x,y,y'|,
\end{equation}
where $x',y' \in \partial G$ such that the circle that contains $x,x',y,y'$ is orthogonal to $\partial G$ and the points $x',x,y,y'$ occur in this order. In particular, \eqref{rhoabsratio} says that for $G \in \{ \Bn,\Hn \}$ the Apollonian distance agrees with the hyperbolic distance $\rho_G = \alpha_G$.

Hyperbolic geodesics are arcs of circles that are orthogonal to the boundary of the domain. More precisely, a hyperbolic geodesic segment is the intersection of the domain with the Euclidean circle or straight line which is orthogonal to the boundary of the domain, see \cite{b1}. Therefore, the points $x'$ and $y'$ are the end points of the hyperbolic geodesic segment which contains $x$ and $y$. For any two distinct points the hyperbolic geodesic segment is unique.

Given two distinct points $x,y \in \H^2$ the circle $C_{xy}$ containing $x$, $y$ and perpendicular to the $x$-axis can be characterized as the circle through the three points $x$, $y$, $\overline{x}$, where $\overline{x}$ is the image of $x$ under the reflection in the $x$-axis (the map $(u,v) \mapsto (u,-v)$). Moreover, the points $x'$, $x$, $y$, $y'$ occur in this order on $C_{xy}$ and $\{ x',y' \} = C_{xy} \cap \R$.

Similarly, for $x,y \in \B^2$ the circle $C_{xy}$ containing $x,y$ and perpendicular
to $\partial \B^2$ is the circle through $x$, $y$, $x^*$, where $x^*=x/|x|^2$
is the image of $x$ under the reflection in the unit circle.
Again $\{ x',y' \} = C_{xy} \cap \B^2$. Hyperbolic distance is invariant
under M\"obius transformations of $\Bn$ onto $\Bn$ or onto $\Hn$.

\begin{figure}[ht!]
  \begin{center}
    \includegraphics[width=6cm]{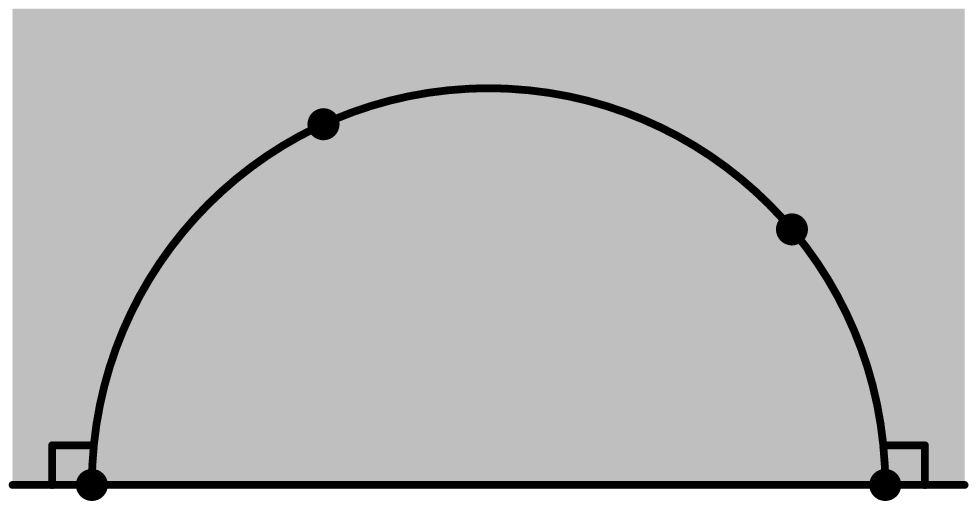}\hspace{5mm}
    \includegraphics[width=4cm]{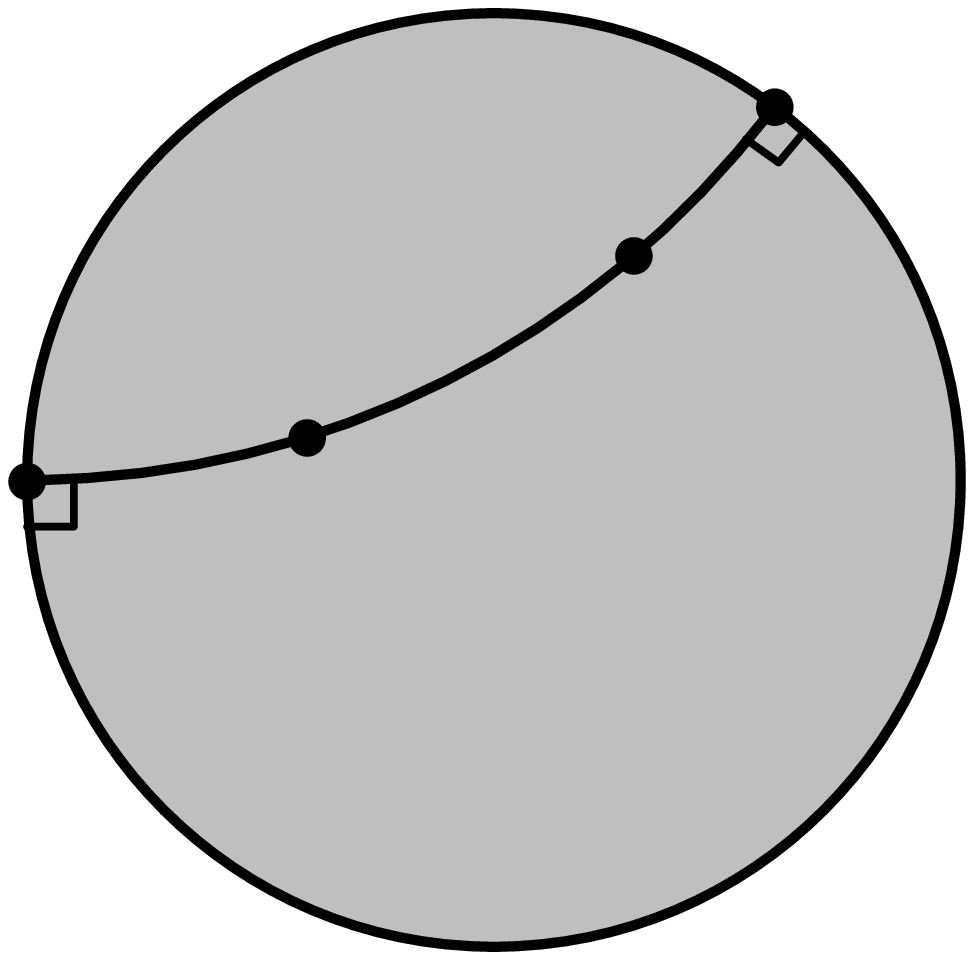}
    \caption{An example of a hyperbolic geodesic segment in the half plane and the unit disk.}
  \end{center}
\end{figure}

The above basic facts can be found in our standard references \cite{b1,ah2} and
in many other sources on hyperbolic geometry such as \cite{a,bm,kl}.
Farreaching and specialized advanced texts discussing hyperbolic geometry
include \cite{r, mar}.

Our plan here is to show the many links between Apollonian circles and hyperbolic geometry.
In particular, we give an interpretation to geodesics in terms of Apollonian circles and
give examples for the determination of some natural concepts of hyperbolic geometry such
as the midpoint of a geodesic and the "base points" of a geodesic. We also discuss
the natural question of comparing distances in the Euclidean and hyperbolic geometry
recently investigated by C.J. Earle and L.A. Harris \cite{eh}.

\section{Hyperbolic geometry in the unit disk}

We denote Euclidean balls and spheres by $B^n(x,r)$ and $S^{n-1}(x,r)$, respectively. For any metric $m$ we denote metric ball
\[
  B_m(x,r) = \{ y \colon m(x,y) < r \}.
\]

\begin{proposition}
  The hyperbolic sphere $\partial B_\rho(x,r)$, $x \in \Bn$, $r > 0$, is an Apollonian sphere with base points $x$ and $x^* = x/|x|^2$.
\end{proposition}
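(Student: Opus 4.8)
The plan is to reduce the statement to a single computation: I will show that on the hyperbolic sphere $\partial B_\rho(x,r)$ the Euclidean ratio $|x-y|/|x^*-y|$ is constant, which is exactly the defining property of an Apollonian sphere with base points $x$ and $x^*$ (see the definition of $B_{x,y}^c$ and Lemma \ref{apollonian}). The engine of the argument is the elementary algebraic identity
\[
  |x|^2\,|x^*-y|^2 \;=\; (1-|x|^2)(1-|y|^2) + |x-y|^2, \qquad x,y \in \Bn,\ x \neq 0,
\]
which I would establish by expanding both sides after substituting $x^* = x/|x|^2$; it is equivalent to the classical identity $1 - 2\,x\cdot y + |x|^2|y|^2 = (1-|x|^2)(1-|y|^2) + |x-y|^2$.

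Next I would feed in the distance formula \eqref{hyperbolicB}, which rearranges to $(1-|x|^2)(1-|y|^2) = |x-y|^2/\sinh^2(\rho(x,y)/2)$. Substituting this into the identity above gives
\[
  |x|^2\,|x^*-y|^2 \;=\; |x-y|^2\Bigl(1 + \tfrac{1}{\sinh^2(\rho(x,y)/2)}\Bigr) \;=\; |x-y|^2\,\frac{\cosh^2(\rho(x,y)/2)}{\sinh^2(\rho(x,y)/2)},
\]
and hence $\dfrac{|x-y|}{|x^*-y|} = |x|\tanh\dfrac{\rho(x,y)}{2}$ for all $y \in \Bn$. For fixed $x$ and $r>0$ the right-hand side equals $c := |x|\tanh(r/2)$ precisely when $\rho(x,y)=r$, so $\partial B_\rho(x,r) = \{\, y : |x-y| = c\,|x^*-y| \,\} = \partial B_{x,x^*}^c$. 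Since $|x|<1$ and $\tanh(r/2)<1$ we have $c \in (0,1)$, so by Apollonius' theorem (and Lemma \ref{apollonian}) this is genuinely a Euclidean sphere rather than a hyperplane.

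Two minor points remain. The degenerate case $x=0$ must be treated separately: then $x^*=\infty$ and the formula \eqref{hyperbolicB} gives $\partial B_\rho(0,r) = S^{n-1}(0,\tanh(r/2))$, which is the Apollonian sphere with base points $0$ and $\infty$, so the statement still holds in the Möbius sense. Otherwise there is no real obstacle: the whole proof is the verification of the displayed identity followed by substitution. The only genuinely conceptual content is recognizing that the correct second base point must be the inversion $x^* = x/|x|^2$ — which is natural given that $x^*$ is exactly the reflection of $x$ appearing in the description of hyperbolic geodesics in $\Bn$ recalled in the introduction, and given that the factor $(1-|x|^2)$ in \eqref{hyperbolicB} is what makes the identity close up.
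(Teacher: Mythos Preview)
Your proof is correct and follows essentially the same route as the paper: the paper writes $A[x,y]^2 = |x-y|^2 + (1-|x|^2)(1-|y|^2)$ (the Ahlfors bracket), observes that $|x|\,|x^*-y| = A[x,y]$ --- which is exactly your displayed identity --- and combines this with $\tanh(\rho/2) = |x-y|/A[x,y]$ to obtain $|x-y|/|x^*-y| = |x|\tanh(\rho(x,y)/2)$, precisely your conclusion and with the same constant $c=|x|\tanh(r/2)$. Your separate treatment of the degenerate case $x=0$ is a small addition the paper leaves implicit.
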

\begin{proof}
  Fix $y \in \partial B_\rho (x,r)$. Then it follows from \eqref{hyperbolicB} that
  \[
    \tanh^2 \frac{\rho(x,y)}{2} = \frac{|x-y|^2}{A[x,y]^2} = \tanh^2 \frac{r}{2} = c(r)^2,
  \]
  where $A[x,y]^2 = |x-y|^2+(1-|x|^2)(1-|y|^2)$, $x,y \in \Bn$, is the Ahlfors bracket.
  On the other hand, a simple verification shows that $|x||x^* -y|= A[x,y]$ and hence
  \[
    \frac{|x-y|^2}{|x^*-y|^2} = \frac{|x-y|^2}{(A[x,y]/|x|)^2} = |x|^2 \left( \frac{|x-y|}{A[x,y]} \right)^2 = |x|^2 c(r)^2.
  \]
  This is independent of $y$ and hence $\partial B_\rho (x,r)$ is an Apollonian sphere. We also see that
  \[
    B_\rho(x,r) = B_{x,x^*}^{|x|c(r)}.
  \]
\end{proof}

From this proof we can read off the following simple formula for $x,y \in \Bn$
\[
    \tanh \frac{\rho(x,y)}{2} = \frac{|x-y|}{|x||x^* -y|} \,, \,\,\, x^* = x/|x|^2 \,.
  \]

Some basic properties of orthogonal circles will be recalled now.
For that purpose the reader might wish to see \cite[p. 6, Exercise 1.1.27]{kr}.
The next result gives a formula for hyperbolic geodesic segment in $\B^2$.

\begin{lemma}\label{orthogB2}
  Let $a \in \C$ with $|a| > 1$. Then $S^1(a,r)$ is orthogonal to $S^1(0,1)$ for $|a|^2 = 1+r^2$. Given $x,y \in \B^2$ such that $0$, $x$ and $y$ are noncollinear the orthogonal circle $S^1(a,r)$ contains $x$ and $y$
 if  \[
    r=\frac{|x-y||x|y|^2-y|}{2|y||x_1 y_2-x_2 y_1|} \quad \textrm{and} \quad a=i\frac{y(1+|x|^2)-x(1+|y|^2)}{2(x_2 y_1-x_1 y_2)}
  \]
  and $S^1(a,r) \cap S^1(0,1) = \{ z \in \C \colon z=a/|a| \exp(\pm i\theta),\, \theta = \arccos (1/|a|) \}$.
\end{lemma}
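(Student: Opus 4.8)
The plan is to prove the three assertions of the lemma in order: first the orthogonality condition $|a|^2 = 1+r^2$, then the formulas for $r$ and $a$, and finally the description of the intersection $S^1(a,r) \cap S^1(0,1)$. The first and third are quick geometric observations, while the middle part is the real computational content.

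For the orthogonality condition, recall that two circles $S^1(0,1)$ and $S^1(a,r)$ meet orthogonally exactly when the tangent lines at an intersection point are perpendicular, equivalently when the radii to an intersection point are perpendicular, i.e. when $1 + r^2 = |a|^2$ by the Pythagorean theorem applied to the triangle with vertices $0$, $a$, and an intersection point. (Equivalently, one may invoke the power-of-a-point criterion: $S^1(a,r) \perp S^1(0,1)$ iff $S^1(a,r)$ is invariant under inversion in $S^1(0,1)$, which forces $|a|^2 - r^2 = 1$.) This is the content cited from \cite[p. 6, Exercise 1.1.27]{kr}, so I would state it briefly and move on.

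For the main step, I would determine $a$ and $r$ from the two conditions $|x - a| = r = |y - a|$ together with $|a|^2 = r^2 + 1$. Subtracting $|x-a|^2 = |y-a|^2$ gives a real-linear equation in $a$: namely $2\,\mathrm{Re}(\bar a(x - y)) = |x|^2 - |y|^2$, which says $a$ lies on the perpendicular bisector of $[x,y]$; this is one scalar constraint. A second scalar constraint comes from $|x - a|^2 = |a|^2 - 1$, i.e. $|x|^2 - 2\,\mathrm{Re}(\bar a x) = -1$, so $2\,\mathrm{Re}(\bar a x) = 1 + |x|^2$, and symmetrically $2\,\mathrm{Re}(\bar a y) = 1 + |y|^2$. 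Writing $a = a_1 + i a_2$ and $x = x_1 + ix_2$, $y = y_1 + iy_2$, these last two give a $2\times 2$ real linear system
\[
  x_1 a_1 + x_2 a_2 = \tfrac12(1+|x|^2), \qquad y_1 a_1 + y_2 a_2 = \tfrac12(1+|y|^2),
\]
whose determinant is $x_1 y_2 - x_2 y_1$, nonzero precisely because $0,x,y$ are noncollinear. Solving by Cramer's rule yields $a_1$ and $a_2$, and assembling $a = a_1 + i a_2$ should collapse to the stated closed form $a = i\,\frac{y(1+|x|^2) - x(1+|y|^2)}{2(x_2 y_1 - x_1 y_2)}$ — the factor $i$ appearing because the numerator $y(1+|x|^2) - x(1+|y|^2)$, viewed as a complex number, is $i$ times the real vector $(a_1, a_2)$ up to the common denominator. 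Then $r = |x - a|$, or more symmetrically $r^2 = |a|^2 - 1$; expanding this and simplifying — using $|x_1y_2 - x_2y_1|$ in the denominator — should give $r = \frac{|x-y|\,\big||x|\,|y|^2 \!\cdot\! \text{(unit)} - y\big|}{2|y|\,|x_1y_2 - x_2y_1|}$ as written. I would present the linear system and Cramer's rule explicitly, then state that a direct but routine simplification yields the displayed formulas.

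For the intersection $S^1(a,r)\cap S^1(0,1)$: any common point $z$ satisfies $|z| = 1$, so $z = e^{i\psi}$ for some argument $\psi$, and lies at Euclidean distance $r$ from $a$. In the triangle $0, a, z$ with sides $1$, $|a|$, $r = \sqrt{|a|^2-1}$, the angle at $0$ between the directions of $z$ and $a$ has cosine $(1 + |a|^2 - r^2)/(2|a|) = 2/(2|a|) = 1/|a|$ by the law of cosines. Hence $z = \frac{a}{|a|}e^{\pm i\theta}$ with $\theta = \arccos(1/|a|)$, which is exactly the claimed form; the two choices of sign correspond to the two intersection points reflected across the line through $0$ and $a$.

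**The main obstacle** I anticipate is purely algebraic: verifying that Cramer's rule output for $a$, and the subsequent expression $r = |x - a|$, simplify cleanly to the exact forms printed in the lemma — in particular matching the somewhat unusual numerator $|x|\,|y|^2 - y$ (which presumably should be read with $|x|$ and $|y|$ scalars multiplying appropriate vectors) and tracking the sign inside $|x_1y_2 - x_2y_1|$ versus $x_2y_1 - x_1y_2$. There is nothing conceptually deep here, but the bookkeeping is delicate enough that I would organize it as: (i) solve the linear system, (ii) compute $|a|^2$, (iii) set $r^2 = |a|^2 - 1$ and factor. I expect step (iii) to require recognizing $|x-y|^2$ as a common factor after combining terms over the common denominator $4|y|^2(x_1y_2 - x_2y_1)^2$.
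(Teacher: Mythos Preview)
The paper does not supply a proof of this lemma; it is stated as a known fact, with the preceding sentence pointing the reader to \cite[p.~6, Exercise~1.1.27]{kr}. So there is no original argument to compare against, and your proposal would in fact fill a gap.

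Your plan is sound. The orthogonality criterion via the Pythagorean theorem and the description of $S^1(a,r)\cap S^1(0,1)$ via the law of cosines are both correct and need no elaboration. For the middle part, your linear system from $2\,\mathrm{Re}(\bar a x)=1+|x|^2$ and $2\,\mathrm{Re}(\bar a y)=1+|y|^2$ is exactly right, and Cramer's rule does give the stated $a$ after repackaging the real pair $(a_1,a_2)$ as a complex number.

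One remark on the numerator you found puzzling: read $|x|y|^2-y|$ as $\bigl|\,x\,|y|^2 - y\,\bigr|$, the modulus of the complex number $x|y|^2-y$, not as a product of the scalars $|x|$ and $|y|^2$. Since inversion in $S^1(0,1)$ fixes every orthogonal circle, the circle $S^1(a,r)$ through $x,y$ also passes through $y^*=y/|y|^2$, and $\bigl|x|y|^2-y\bigr|=|y|^2\,|x-y^*|$. Thus $r$ is the circumradius of the triangle with vertices $x,y,y^*$; computing it via $r=\dfrac{|x-y|\,|y-y^*|\,|x-y^*|}{4\,\mathrm{Area}(x,y,y^*)}$, with $|y-y^*|=(1-|y|^2)/|y|$ and the area obtained from the distance of $x$ to the line through $0,y,y^*$, collapses immediately to the displayed formula. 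This is considerably cleaner than expanding $r^2=|a|^2-1$ and hunting for the factor $|x-y|^2$, which is the route you anticipated as the main obstacle.
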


\begin{lemma}\label{orthoB}
  Let $x \in \Rn$, $r>0$ and $y,z \in S^{n-1}(x,r)$. Then $y,z \in S^{n-1}(w,|y-w|)$ and $S^{n-1}(w,|y-w|)$ is orthogonal to $S^{n-1}(x,r)$, where
  \[
    w = x+\frac{|y-x|^2}{\left| (y+z)/2-x \right|^2} \left( \frac{y+z}{2}-x \right).
  \]
\end{lemma}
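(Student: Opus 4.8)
The plan is to reduce the claim to two elementary facts about the point $w$: first, that $|z-w|=|y-w|$, so that $y$ and $z$ genuinely lie on a common sphere centred at $w$; and second, that the sphere $S^{n-1}(w,|y-w|)$ meets $S^{n-1}(x,r)$ orthogonally. Since everything is affine-invariant in the natural sense and the two spheres already share the points $y,z$, I would first translate so that $x=0$, which makes the formula read $w = \frac{|y|^2}{|(y+z)/2|^2}\,\frac{y+z}{2}$. Write $m=(y+z)/2$ for the Euclidean midpoint of $y$ and $z$; then $w = (|y|^2/|m|^2)\,m$, i.e.\ $w$ is the point on the ray through $m$ at the distance making $\langle w,m\rangle = |y|^2$ (after the translation $|y|=|z|=r$, so this is $r^2$). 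The geometric picture is that $w$ is the pole of the chord line (the line through $y$ and $z$) with respect to the sphere $S^{n-1}(x,r)$, which is exactly the classical condition for the orthogonal sphere through two points of a circle.

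For the first step, $|y-w|=|z-w|$, I would expand $|y-w|^2-|z-w|^2 = |y|^2-|z|^2 - 2\langle y-z,w\rangle$. After the translation $|y|^2=|z|^2$, so this equals $-2\langle y-z,w\rangle$, and since $w$ is a scalar multiple of $m=(y+z)/2$, which is orthogonal to $y-z$ (because $|y|=|z|$ forces $\langle y+z,y-z\rangle = |y|^2-|z|^2=0$), we get $\langle y-z,w\rangle=0$. Hence $|y-w|=|z-w|$ and the sphere $S^{n-1}(w,|y-w|)$ passes through both $y$ and $z$. This part is purely routine.

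For the orthogonality, I recall that two spheres $S^{n-1}(x,r)$ and $S^{n-1}(w,s)$ are orthogonal precisely when $|x-w|^2 = r^2+s^2$ (the radical condition; equivalently, the tangent from one centre to the other sphere has length equal to the first radius). Taking $x=0$ again, I must verify $|w|^2 = r^2 + |y-w|^2$, i.e.\ $|w|^2-|y-w|^2 = r^2$. Expanding, $|w|^2-|y-w|^2 = 2\langle y,w\rangle - |y|^2 = 2\langle y,w\rangle - r^2$, so the claim becomes $\langle y,w\rangle = r^2$. Now $w = (r^2/|m|^2)\,m$, so $\langle y,w\rangle = (r^2/|m|^2)\langle y,m\rangle$, and $\langle y,m\rangle = \langle y,(y+z)/2\rangle = \tfrac12(|y|^2+\langle y,z\rangle)$. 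On the other hand $|m|^2 = \tfrac14|y+z|^2 = \tfrac14(|y|^2+2\langle y,z\rangle+|z|^2) = \tfrac12(|y|^2+\langle y,z\rangle)$ using $|y|=|z|=r$. Hence $\langle y,m\rangle = |m|^2$, so $\langle y,w\rangle = r^2$ as required. Finally I translate back by adding $x$ throughout, which only shifts the centres and leaves all the distance identities intact, so the statement holds in general.

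The only place where care is needed — the ``main obstacle,'' such as it is — is handling the degenerate configuration: if $y$, $z$ and $x$ are collinear then $m-x$ is parallel to $y-x$, the chord passes through the centre $x$, and its pole is at infinity, so $w$ is undefined (the denominator $|(y+z)/2-x|^2$ can still be nonzero, but then $w$ lies on the line through $x$, $y$, $z$ and one checks $S^{n-1}(w,|y-w|)$ is in fact tangent, not transverse, to $S^{n-1}(x,r)$ unless $y=z$). In the stated generality this is a measure-zero case that one simply excludes, exactly as Lemma~\ref{orthogB2} excludes the collinear case with $0$; I would note this explicitly. Apart from that, the whole argument is a two-line computation with inner products once the right reformulation $w=(r^2/|m|^2)m$ (after centring at $x$) is in hand.
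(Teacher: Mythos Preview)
Your proof is correct, and it takes a different route from the paper's. The paper argues synthetically via similar right triangles: writing $s=(y+z)/2$, it observes that triangle $xsy$ has a right angle at $s$ (the segment from the centre to the midpoint of a chord is perpendicular to the chord) and triangle $xyw$ has a right angle at $y$ (orthogonality of the two spheres forces the radii $x\!-\!y$ and $w\!-\!y$ to be perpendicular at the common point $y$); since both triangles share the angle at $x$ (because $w$ lies on the ray from $x$ through $s$), they are similar, giving $|x-y|/|x-w|=|x-s|/|x-y|$ and hence the stated formula for $w$. In effect the paper \emph{derives} $w$ from the geometric constraints, whereas you take the formula as given and \emph{verify} the two claimed properties by direct inner-product computation after translating $x$ to the origin. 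Your route is a little longer but entirely self-contained and makes the orthogonality criterion $|x-w|^2=r^2+|y-w|^2$ explicit; the paper's is more compact but leaves the reader to supply why the two triangles are right and why $w$ lies on the ray through $s$.

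One small remark: your parenthetical on the degenerate case is slightly muddled. If $y,z\in S^{n-1}(x,r)$ are collinear with $x$ and $y\neq z$, then necessarily $z=2x-y$ (antipodal points), so $(y+z)/2=x$ and the denominator \emph{is} zero---there is no sub-case with nonzero denominator to consider. This does not affect your main argument.
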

\begin{proof}
  Let us denote $s=(y+z)/2$. We have
  \[
    w-x = \lambda \left( s-x \right)
  \]
  for a scalar $\lambda > 1$. Because triangles $xsy$ and $xyw$ are right and similar, we obtain
  \[
    \frac{|x-y|}{|x-w|} = \frac{|x-s|}{|x-y|}.
  \]
  By the above equalities we obtain
  \[
    w = x+\frac{|x-y|^2}{|x-s|^2}(s-x)
  \]
  and the assertion follows.
\end{proof}

\begin{proposition}\label{distancefromline}
  The Euclidean distance of the line containing distinct points $a,b \in \Rn$ to the origin is
  \[
    \frac{\sqrt{(|a-b|^2-(|a|-|b|)^2)((|a|+|b|)^2-|a-b|^2)}}{2|a-b|}.
  \]
\end{proposition}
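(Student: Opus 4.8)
The plan is to compute the distance from the origin to the line through $a$ and $b$ by a direct vector computation, then recognize the resulting expression as a product of two Heron-type factors. First I would observe that the distance $d$ from $0$ to the line $\{a+t(b-a) \colon t \in \R\}$ equals the length of the component of $a$ orthogonal to the direction $b-a$, namely
\[
  d^2 = |a|^2 - \frac{(a \cdot (b-a))^2}{|b-a|^2} = \frac{|a|^2|b-a|^2 - (a\cdot(a-b))^2}{|a-b|^2}.
\]
Equivalently, one may use the planar-triangle formula $d = 2\,\mathrm{Area}(0,a,b)/|a-b|$, where the area is that of the triangle with vertices $0$, $a$, $b$; this reduces the problem to showing that the numerator in the claimed expression is $4\,\mathrm{Area}^2$.

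Next I would expand the numerator of $d^2$ using $a\cdot(a-b) = |a|^2 - a\cdot b$ and the polarization identity $2a\cdot b = |a|^2 + |b|^2 - |a-b|^2$. A short algebraic simplification should collapse $|a|^2|b-a|^2 - (a\cdot(a-b))^2$ into a symmetric quartic in $|a|$, $|b|$, $|a-b|$; the natural target is the classical identity (essentially $16$ times Heron's area squared)
\[
  4|a|^2|b|^2 - (|a|^2+|b|^2-|a-b|^2)^2 = \bigl(|a-b|^2-(|a|-|b|)^2\bigr)\bigl((|a|+|b|)^2-|a-b|^2\bigr),
\]
which is just a difference-of-squares factorization after writing the left side as $(2|a||b|)^2 - (|a|^2+|b|^2-|a-b|^2)^2$. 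Matching constants, $|a|^2|b-a|^2 - (a\cdot(a-b))^2$ should equal one quarter of this, giving $d^2$ as the claimed fraction with $4|a-b|^2$ in the denominator, hence $d$ as stated after taking the square root.

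I do not expect a genuine obstacle here; the argument is a routine computation. The only point requiring a little care is the bookkeeping in the polarization step and making sure the difference-of-squares factorization is applied to the correct grouping, so that the two factors come out exactly as $|a-b|^2-(|a|-|b|)^2$ and $(|a|+|b|)^2-|a-b|^2$ rather than some other pairing. It is also worth noting that both factors are nonnegative whenever $0$, $a$, $b$ are not collinear with $0$ between $a$ and $b$ adjustments aside — they are the products of the triangle-inequality slacks — so the square root is real, consistent with $d$ being a genuine distance. No appeal to the earlier lemmas is needed; this is self-contained Euclidean geometry, though it will be used in tandem with Lemma~\ref{orthogB2} and Lemma~\ref{orthoB} in the sequel.
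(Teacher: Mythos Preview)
Your proposal is correct and follows essentially the same route as the paper: the paper also reduces the distance to (twice the) triangle area divided by $|a-b|$, then expresses $\sin\alpha$ via the law of cosines (your polarization identity) and factors $4|a|^2|b|^2-(|a|^2+|b|^2-|a-b|^2)^2$ as a difference of squares. Your orthogonal-projection formula for $d^2$ is just an equivalent packaging of the same area computation.
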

\begin{proof}
  We denote the line that contains $a$ and $b$ by $l$, the angle $\measuredangle (a,0,b)$ by $\alpha$ and area of triangle $0ab$ by $A$. Since $2A = |a||b|\sin \alpha = d(l,0) |a-b|$ we obtain that
  \begin{equation}\label{distancetoline}
    d(l,0) = \frac{|a||b|}{|a-b|}\sin \alpha = \frac{\sqrt{|a|^2|b|^2-(a \cdot b)^2}}{|a-b|}.
  \end{equation}
  By the law of Cosines and the fact that $\sin^2 \alpha =1-\cos^2 \alpha$ we obtain
  \[
    \sin \alpha = \frac{\sqrt{(|a-b|^2-(|a|-|b|)^2)((|a|+|b|)^2-|a-b|^2)}}{2|a||b|}
  \]
  which together with \eqref{distancetoline} implies the assertion.
\end{proof}


Our aim is to find the hyperbolic midpoint $z$ of points $x,y \in \B^2$. By \eqref{hyperbolicB} it is clear that $\rho(x,z) = \rho(y,z)$ is equivalent to
\begin{equation}\label{apollonianratio}
  |x-z| = \sqrt{\frac{1-|x|^2}{1-|y|^2}}|y-z|
\end{equation}
and the next result characterizes the points satisfying this equality.

\begin{corollary}\label{circlea2}
  Let $x,y \in \B^2$ with $|y| < |x|$. Then
  \begin{eqnarray*}
    C & = & \{ z \in \R^2 \colon \rho(x,z)=\rho(y,z) \}\\
    & = & \left\{ z \in \R^2 \colon \frac{|x-z|}{|y-z|} = A,\, A=\sqrt{\frac{1-|x|^2}{1-|y|^2}} \right\} = S^1(w,r),
  \end{eqnarray*}
  where
  \[
    w = \frac{x-A^2y}{1-A^2} \quad \textrm{and} \quad  r = \frac{A|x-y|}{1-A^2}.
  \]
  Moreover, the circle $C$ is orthogonal to $S^1(0,1)$ and the geodesic $J$ joining $x$ and $y$.
\end{corollary}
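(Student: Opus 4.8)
The plan is to exhibit $C$ as an Apollonian circle and then verify the two orthogonality claims. First I would observe that by \eqref{hyperbolicB} the condition $\rho(x,z) = \rho(y,z)$ is equivalent to \eqref{apollonianratio}, i.e. to $|x-z|/|y-z| = A$ with $A = \sqrt{(1-|x|^2)/(1-|y|^2)}$; since $|y| < |x|$ we have $A < 1$, so the set $C$ is precisely $\partial B_{x,y}^A$. Applying Lemma~\ref{apollonian} with base points $x, y$ and constant $c = A \in (0,1)$ gives $C = S^1(w,r)$ with $w = (x - A^2 y)/(1-A^2)$ and $r = A|x-y|/(1-A^2)$, which is the stated formula. (One should note the order of base points is $x,y$ here rather than the $y,x$ appearing in Lemma~\ref{apollonian}, so the roles of $x$ and $y$ in the formula are swapped accordingly; with $A < 1$ this is the consistent choice.)

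Next I would prove that $C$ is orthogonal to $S^1(0,1) = \partial\B^2$. The clean way is to use the Proposition preceding the displayed formula for $\tanh(\rho/2)$: the hyperbolic sphere $\partial B_\rho(x,\sigma)$ is the Apollonian sphere $B_{x,x^*}^{|x|c(\sigma)}$ with base points $x$ and $x^* = x/|x|^2$, and every such hyperbolic sphere is orthogonal to $\partial\B^2$ (equivalently, one checks directly via the power of the point $w$ with respect to $S^1(0,1)$, i.e. that $|w|^2 - r^2 = 1$, using the identity $|x||x^*-y| = A[x,y]$ from that Proposition). Alternatively, and perhaps most transparently, $C$ is the perpendicular bisector of $x$ and $y$ in the hyperbolic metric; since reflection in $\partial\B^2$ (inversion $z \mapsto z^*$) is a hyperbolic isometry fixing $\B^2$ setwise, if one shows $C$ is invariant under this inversion then $C$ must meet $\partial\B^2$ orthogonally. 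Invariance under the inversion follows because the inversion, being a Möbius map, sends the Apollonian circle $\partial B_{x,y}^A$ to another Apollonian circle with base points $x^*, y^*$; one then checks that the defining ratio is preserved, using $|z^* - x^*| = |z-x|/(|z||x|)$ for $z$ on the unit circle fixed and the relation between $A$ and $|x|,|y|$.

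For orthogonality of $C$ with the geodesic $J$ through $x$ and $y$: $J$ is the arc of the circle $C_{xy}$ orthogonal to $\partial\B^2$ passing through $x$ and $y$. The key observation is that $C_{xy}$ is itself invariant under reflection in $C = \partial B_{x,y}^A$: reflection in the Apollonian circle with base points $x,y$ is the Möbius involution that interchanges $x$ and $y$ (this is the standard fact that $\partial B_{x,y}^c$ is the locus where $|z-x|/|z-y|$ equals a constant, and inversion in it swaps the two base points). Since $C_{xy}$ passes through both $x$ and $y$ and reflection in $C$ swaps these two points, $C_{xy}$ is mapped to a circle through $y$ and $x$ orthogonal to $\partial\B^2$ — but such a circle is unique, so $C_{xy}$ is fixed setwise, hence $C_{xy} \perp C$, and therefore $J \perp C$ at their intersection point (which is exactly the hyperbolic midpoint of $x$ and $y$).

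The main obstacle is the second orthogonality claim: one must be careful that "reflection in the Apollonian circle $\partial B_{x,y}^c$ swaps $x$ and $y$" is applied correctly and that the uniqueness of the orthogonal circle through two points of $\B^2$ is invoked. This uniqueness was stated in the excerpt ("For any two distinct points the hyperbolic geodesic segment is unique"), so the argument is available; the first orthogonality claim then either follows from the same circle of ideas or, more directly, from the Proposition identifying hyperbolic spheres with Apollonian spheres together with the already-established fact that those are orthogonal to $\partial\B^2$.
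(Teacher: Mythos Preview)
Your identification of $C$ with the Apollonian circle and the explicit $(w,r)$ via Lemma~\ref{apollonian} is correct and matches the paper.

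For $C\perp S^1(0,1)$, however, two of your three suggested routes do not work. Hyperbolic spheres $\partial B_\rho(x,\sigma)$ lie entirely inside $\B^2$ and never meet $\partial\B^2$, so they cannot be orthogonal to it; and in any case $C$ is the hyperbolic perpendicular bisector of $x,y$, not a hyperbolic sphere about either point, so that Proposition does not apply here. Your invariance argument also rests on the false claim that inversion $z\mapsto z^*$ in $\partial\B^2$ is a hyperbolic isometry fixing $\B^2$ setwise: that inversion sends $\B^2$ to its exterior. Your parenthetical suggestion --- verify $|w|^2=1+r^2$ directly --- is the one that survives, and this is exactly what the paper does: a short algebraic reduction shows $|w|^2-r^2=(|x|^2-A^2|y|^2)/(1-A^2)=1$, and Lemma~\ref{orthogB2} finishes it. (The Ahlfors bracket identity is not needed for this computation.)

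For $C\perp J$ your approach is valid and genuinely different from the paper's. The paper chooses a M\"obius map $T$ sending the intersection point $z=C\cap J\cap\B^2$ to $0$; then $\rho(T(x),0)=\rho(T(y),0)$ gives $|T(x)|=|T(y)|$, so $T(C)$ is the Euclidean perpendicular bisector of $[T(x),T(y)]$, orthogonal to the diameter $T(J)$, and conformality of $T$ pulls this back. Your inversive argument is arguably slicker: from $x-w=A^2(y-w)$ and $|x-w|\,|y-w|=r^2$ one sees that $x$ and $y$ are inverse with respect to $C=S^1(w,r)$, and it is a classical fact that \emph{every} circle through a pair of inverse points is orthogonal to the reference circle --- so $C_{xy}\perp C$ immediately. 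Phrased this way you need neither the uniqueness of the geodesic nor the previously established orthogonality $C\perp\partial\B^2$, so your route to this second claim actually shortens.
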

\begin{proof}
  Since $|y| < |x|$ we have $A \in (0,1)$ and values of $w$ and $r$ follow from Lemma \ref{apollonian}.

  We prove next orthogonality to $S^1(0,1) \,$. Since
  \begin{eqnarray*}
    & & |w|^2 = 1+r^2\\
    & \Longleftrightarrow & |x|^2 + A^4 |y|^2-2A^2 x \cdot y = (1-A^2)^2+A^2(|x|^2+|y|^2-2 x \cdot y)\\
    & \Longleftrightarrow & (1-A^2)|x|^2-A^2(1-A^2)|y|^2 = (1-A^2)^2\\
    & \Longleftrightarrow & |x|^2-A^2|y|^2 = 1-A^2\\
    & \Longleftrightarrow & \frac{|x|^2-|y|^2}{1-|y|^2} = \frac{|x|^2-|y|^2}{1-|y|^2}
  \end{eqnarray*}
  the circle $C$ is orthogonal to $S^1(0,1)$ by Lemma \ref{orthogB2}.

  Finally, we prove orthogonality of $S^1(w,r)$ and $J$. Let us denote intersection of $S^1(w,r)$ and $J$ in $\B^2$ by $z$. Let $T$ be a sense-preserving M\"obius mapping with $T(z) = 0$ (see \cite[1.34]{vu}). Since $\rho(x,z) = \rho(y,z)$ we have $\rho(T(x),0) = \rho(T(y),0)$. Let $v \in C \cap \B^2$. By definition of $C$ we have $\rho(x,v) = \rho(y,v)$ and thus $\rho(T(x),T(v)) = \rho(T(y),T(v))$ implying that $T(C)$ is orthogonal to $[T(x),T(y)]$. Since $T$ is M\"obius we have that $S^1(w,r)$ is orthogonal to $J$.
\end{proof}

\begin{lemma}\label{hypmidpoint}
  The hyperbolic midpoint of points $x,y \in \B^2$ is $z$ with
  \[
    |z| = \frac{\sqrt{(|a_1-a_2|^2-(|a_1|-|a_2|)^2)((|a_1|+|a_2|)^2-|a_1-a_2|^2)}}{2|a_1-a_2|}-\frac{r_1 r_2}{\sqrt{r_1^2+r_2^2}}
  \]
  and
  \[
    \frac{z}{|z|} = \pm \sqrt{\frac{a_1-a_2}{\overline{a_2}-\overline{a_1}}},
  \]
  where $a_1$ is the center of the hyperbolic geodesic joining $x$ and $y$, $r_1 =|a_1-x|$ and $(a_2,r_2)$ is $(w,r)$ of Corollary \ref{circlea2}.
\end{lemma}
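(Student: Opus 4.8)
Throughout we assume, as is implicit in the statement and in Corollary \ref{circlea2}, that $0$, $x$, $y$ are non-collinear and $|y|<|x|$, so that the geodesic $J$ joining $x$ and $y$ is an arc of a circle $S^1(a_1,r_1)$ and the set $C$ of Corollary \ref{circlea2} is the circle $S^1(a_2,r_2)$ with $(a_2,r_2)=(w,r)$. The plan is to identify the hyperbolic midpoint $z$ as the point of $S^1(a_1,r_1)\cap S^1(a_2,r_2)$ lying in $\B^2$ and to read off its modulus and argument from the elementary geometry of these two circles. Indeed, by definition of the midpoint $\rho(x,z)=\rho(y,z)=\tfrac12\rho(x,y)$, so $z$ lies on $C$ as well as on $J$; the two distinct circles meet in at most two points, and we shall see that exactly one of them lies in $\B^2$, so $z$ is determined. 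Two facts from the earlier results carry the proof: (i) both circles are orthogonal to $S^1(0,1)$, whence $|a_1|^2=1+r_1^2$ and $|a_2|^2=1+r_2^2$ by Lemma \ref{orthogB2} and Corollary \ref{circlea2}; (ii) $C$ is orthogonal to $J$ by Corollary \ref{circlea2}, i.e. $S^1(a_1,r_1)\perp S^1(a_2,r_2)$, so that $|a_1-a_2|^2=r_1^2+r_2^2$ and the triangle $a_1za_2$ is right-angled at $z$ with legs $r_1$ and $r_2$.

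For the argument of $z$: inversion in $S^1(0,1)$ maps each of $S^1(a_1,r_1)$, $S^1(a_2,r_2)$ onto itself by (i), hence permutes their intersection set; since $z\in\B^2$ its partner is $z^*=z/|z|^2$ (in particular the second intersection point lies outside $\B^2$, as claimed above). Now $a_1$ and $a_2$, being centers of circles through both $z$ and $z^*$, lie on the perpendicular bisector of the segment $[z,z^*]$; since $z^*$ is a positive multiple of $z$, this bisector is perpendicular to the line through $0$ and $z$. Hence $z$ is a positive multiple of one of the two unit vectors orthogonal to $a_1-a_2$, i.e. of $\pm i(a_1-a_2)/|a_1-a_2|$; and since $(a_1-a_2)/(\overline{a_2}-\overline{a_1})=-(a_1-a_2)^2/|a_1-a_2|^2$, these two vectors are precisely $\pm\sqrt{(a_1-a_2)/(\overline{a_2}-\overline{a_1})}$, which gives the stated formula for $z/|z|$.

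For the modulus of $z$: write $\ell$ for the line through $a_1$ and $a_2$ and $f$ for the foot of the perpendicular from $0$ to $\ell$. The common chord of the two circles is $[z,z^*]$, which lies on the line through $0$ and $z$; this line is perpendicular to the line of centers $\ell$, so $f$ lies on it as well and $0$, $z$, $f$ are collinear. Using $|a_i|^2=1+r_i^2$ one checks that the two circles can meet the line through $0$ perpendicular to $\ell$ only when $d(0,\ell)\ge 1$; as $z\in\B^2$ this yields $d(0,\ell)\ge 1>|z|$, and moreover $z$ lies between $0$ and $f$, so that $|z|=d(0,\ell)-d(z,\ell)$. Here $d(0,\ell)$ is exactly the first summand of the assertion, by Proposition \ref{distancefromline} with $a=a_1$, $b=a_2$; and comparing the two expressions $\tfrac12 r_1r_2$ and $\tfrac12|a_1-a_2|\,d(z,\ell)$ for the area of the right triangle $a_1za_2$ gives $d(z,\ell)=r_1r_2/|a_1-a_2|=r_1r_2/\sqrt{r_1^2+r_2^2}$, the second summand. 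Combining these proves the formula for $|z|$.

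The routine parts are the two area/Pythagoras computations and the short complex-number identity for $z/|z|$. The one point requiring care is the sign in $|z|=d(0,\ell)-d(z,\ell)$ --- that is, verifying that $z$ lies between the origin and the line $\ell$ rather than beyond it --- which, together with recognizing the first summand of the claimed formula as $d(0,\ell)$, is where the orthogonality relations $|a_i|^2=1+r_i^2$ from (i) are indispensable.
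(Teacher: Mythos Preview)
Your proof is correct and follows essentially the same route as the paper: identify $z$ as the point of $S^1(a_1,r_1)\cap S^1(a_2,r_2)$ in $\B^2$, use the pairwise orthogonality of $S^1(0,1)$, $S^1(a_1,r_1)$, $S^1(a_2,r_2)$ to conclude that $0$, $z$, and the foot $v$ of the perpendicular from $z$ to the line $\ell$ through $a_1,a_2$ are collinear, and then write $|z|=d(0,\ell)-d(z,\ell)$ with Proposition~\ref{distancefromline} giving the first term and the right triangle $a_1za_2$ the second. The only differences are cosmetic---you argue the direction $z/|z|$ via inversion in $S^1(0,1)$ rather than the paper's short algebraic manipulation of $|a_i-z|^2=r_i^2$, and you compute $d(z,\ell)$ via the area of $a_1za_2$ rather than similar triangles---and you are in fact more careful than the paper in justifying the sign in $|z|=|v|-|z-v|$.
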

\begin{proof}
  By Corollary \ref{circlea2} circles $S^1(0,1)$,  $S^1(a_1,r_1)$ and $S^1(a_2,r_2)$ are pairwise orthogonal. Therefore by Lemma \ref{orthogB2} $|a_i|^2=r_1^2+1$ for $i=1,2$ and $|a_i-z|^2 = r_i^2$ implies $|z|^2-\overline{z}a_i-z \overline{a_i}+1=0$, which is equivalent to
  \[
    \frac{z}{|z|} = \pm \sqrt{\frac{a_1-a_2}{\overline{a_2}-\overline{a_1}}}.
  \]
  By the orthogonality of $S^1(a_1,r_1)$ and $S^1(a_2,r_2)$ and similar right triangles $a_1 a_2 z$ and $z v a_2$, $v \in [a_1,a_2]$, we have $|z-v| =r_1 r_2 /\sqrt{r_1^2+r_2^2}$. Since $z \in [0,v]$ we have by Proposition \ref{distancefromline}
  \begin{eqnarray*}
    |z| & = & |v|-|z-v|\\
    & = & \frac{\sqrt{(|a_1-a_2|^2-(|a_1|-|a_2|^2))((|a_1|+|a_2|)^2-|a_1-a_2|^2)}}{2|a_1-a_2|}-\frac{r_1 r_2}{\sqrt{r_1^2+r_2^2}}.
  \end{eqnarray*}
\end{proof}

\begin{figure}[ht!]
  \begin{center}
    \includegraphics[width=5cm]{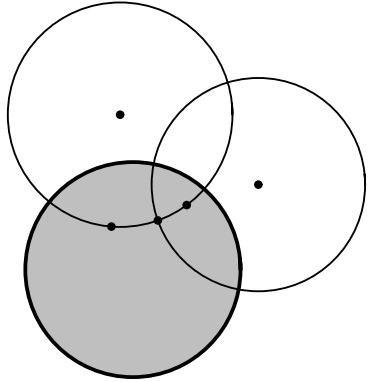}
    \caption{The hyperbolic midpoint in the unit disk.}
  \end{center}
\end{figure}

Lemma \ref{hypmidpoint} gives the hyperbolic midpoint of two points in $\B^2$. We will now describe a geometric construction for finding the hyperbolic midpoint.

\setcounter{subsection}{\value{equation}}
\subsection{Bisection of hyperbolic segment}

Let $x,y \in \B^2$. By \eqref{apollonianratio} points $z \in \B^2$ such that $\rho(x,z) = \rho(y,z)$ are on an Apollonian circle $\partial B_{x,y}^c$, where $c$ depends only on $|x|$ and $|y|$. The hyperbolic geodesic joining $x$ and $y$ is an arc of a circle $S^1(a,r)$, where $a$ and $r$ depend only on $x$ and $y$. Formulas for $a$ and $r$ are given in Lemma \ref{orthogB2}.
\begin{enumerate}
  \item Construct the Apollonian circle $\partial B_{x,y}^c$, which contains the points $z$ such that $\rho(x,z) = \rho(y,z)$.
  \item Construct the circle $S^1(a,r)$, which contains the hyperbolic geodesic joining $x$ and $y$.
  \item Hyperbolic midpoint $z$ of points $x$ and $y$ is the intersection of these two circles, $z = \partial B_{x,y}^c \cap S^1(a,r) \cap \B^2$.
\end{enumerate}

In the particular case $y=0$ the midpoint $z$ can be found in a very simple way.
In fact, for a fixed $x \in \B^2 \cap [0,1), $ the hyperbolic midpoint of $[0,x]$ is the
point of intersection of the segments $[0,x]$ and $[-i,x+ i \sqrt{1- |x|^2}]\,$ by
\cite[1.41(2)]{vu}. Bisection of hyperbolic segment is also considered in \cite{vw}.

\section{Hyperbolic distance in the unit ball}

For the sequel it is convenient to interpret hyperbolic balls in terms of Euclidean geometry as follows: for $x \in \Bn$, $r > 0$
\begin{equation}\label{hypeucl}
  B_\rho(x,r) = B^n \left( w,s \right), \quad w=\frac{x(1-t^2)}{1-|x|^2 t^2}, \quad s= \frac{(1-|x|^2)t}{1-|x|^2 t^2},
\end{equation}
where $t = \tanh (r/2)$. This is a well-known formula, see e.g. \cite[(2.22)]{vu}.

We shall use the following basic inequality for $x \in [0,1]$
\begin{equation}\label{basicineq}
  \sqrt{1-x} \le 1-\frac{x}{2}.
\end{equation}

\begin{lemma}\label{upperbounds}
  For $r,s \in [0,1)$ we have
  \begin{enumerate}
    \item[(1)] $\sqrt{(1-r^2)(1-s^2)} \le \displaystyle 1-rs-\frac{1}{2}\frac{(r-s)^2}{1-rs} \le \displaystyle 1-\left( \frac{r+s}{2} \right)^2$,
    \item[(2)] $\sqrt{(1-r^2)(1-s^2)} \le \displaystyle \sqrt{1+r^2s^2}-\frac{r^2+s^2}{2\sqrt{1+r^2s^2}}$,
    \item[(3)] $\sqrt{(1-r^2)(1-s^2)} \le \displaystyle 1+rs-\frac{1}{2}\frac{(r+s)^2}{1+rs}$,
  \end{enumerate}
\end{lemma}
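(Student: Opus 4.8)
The plan is to reduce all three estimates to the elementary inequality \eqref{basicineq}, $\sqrt{1-x}\le 1-x/2$ for $x\in[0,1]$, by factoring the common left-hand quantity $(1-r^2)(1-s^2)$ in a suitable way in each case. The starting point is the three algebraic identities
\[
  (1-r^2)(1-s^2)=(1-rs)^2-(r-s)^2=(1+r^2s^2)-(r^2+s^2)=(1+rs)^2-(r+s)^2,
\]
each of which is verified by expanding. In every case the subtracted term is nonnegative and, because $(1-r^2)(1-s^2)\ge 0$, it does not exceed the leading term; moreover the leading factors $1-rs$, $1+r^2s^2$, $1+rs$ are all strictly positive for $r,s\in[0,1)$. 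Hence we may write, for instance in the first case,
\[
  \sqrt{(1-r^2)(1-s^2)}=(1-rs)\sqrt{1-\frac{(r-s)^2}{(1-rs)^2}},
\]
where the argument of the inner square root lies in $[0,1]$, and analogously for the other two factorizations.

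For part (1) I would apply \eqref{basicineq} to this display to obtain
\[
  \sqrt{(1-r^2)(1-s^2)}\le(1-rs)\Bigl(1-\tfrac12\tfrac{(r-s)^2}{(1-rs)^2}\Bigr)=1-rs-\tfrac12\tfrac{(r-s)^2}{1-rs},
\]
which is the first inequality of (1). For the second inequality of (1) it suffices to observe that $\bigl(\tfrac{r+s}{2}\bigr)^2-rs=\tfrac14(r-s)^2$, so the claim reduces to $\tfrac14(r-s)^2\le\tfrac12\tfrac{(r-s)^2}{1-rs}$, i.e.\ to $1-rs\le 2$, which holds trivially since $rs\ge 0$ (and is an equality-free strict comparison unless $r=s$, in which case both sides vanish).

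Parts (2) and (3) are then immediate repetitions of the same two-line argument with the factorizations $(1+r^2s^2)-(r^2+s^2)$ and $(1+rs)^2-(r+s)^2$ respectively: pull the positive leading factor out of the square root, check that the remaining radicand lies in $[0,1]$ (again equivalent to $(1-r^2)(1-s^2)\ge 0$ together with positivity of the leading factor), and invoke \eqref{basicineq}. There is no genuine obstacle in this lemma; the only point requiring a little care is to confirm in each case that the quantity to which \eqref{basicineq} is applied really lies in $[0,1]$, but as noted this always follows from $(1-r^2)(1-s^2)\ge 0$ and the positivity of $1-rs$, $1+r^2s^2$, $1+rs$ on $[0,1)^2$.
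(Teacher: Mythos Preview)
Your proof is correct and follows essentially the same approach as the paper: factor $(1-r^2)(1-s^2)$ via the three identities you list, pull the positive leading factor out of the radical, and apply \eqref{basicineq}. The only cosmetic difference is in the second inequality of (1), where the paper computes the difference of the two sides as $\tfrac{(r-s)^2(1+rs)}{4(1-rs)}\ge 0$, while you reduce to $1-rs\le 2$; these are equivalent one-line verifications.
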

\begin{proof}
    \textit{(1)} The first inequality holds because by \eqref{basicineq}
    \begin{eqnarray*}
      \sqrt{(1-r^2)(1-s^2)} & = & (1-rs) \sqrt{1-\left( \frac{r-s}{1-rs} \right)^2}\\
      & < & (1-rs) \left( 1-\frac{1}{2} \left( \frac{r-s}{1-rs} \right)^2 \right)\\
      & = & 1-rs-\frac{1}{2}\frac{(r-s)^2}{1-rs}.
    \end{eqnarray*}
    The second inequality is equivalent to
    \[
      \frac{(r-s)^2(1+rs)}{4(1-rs)} \ge 0
    \]
    and thus the assertion follows.

    \noindent \textit{(2)} The inequality holds because by \eqref{basicineq}
    \begin{eqnarray*}
      \sqrt{(1-r^2)(1-s^2)} & = & \sqrt{1+r^2s^2-(r^2+s^2)}\\
      & = & \sqrt{1+r^2s^2}\sqrt{1-\frac{r^2+s^2}{1+r^2s^2}}\\
      & < & \sqrt{1+r^2s^2} \left( 1-\frac{1}{2}\frac{r^2+s^2}{1+r^2s^2} \right) \\
      & = & \sqrt{1+r^2s^2}-\frac{r^2+s^2}{2\sqrt{1+r^2s^2}}
    \end{eqnarray*}

    \noindent \textit{(3)} The inequality holds because by \eqref{basicineq}
    \begin{eqnarray*}
      \sqrt{(1-r^2)(1-s^2)} & = & (1+rs) \sqrt{1-\left( \frac{r+s}{1+rs} \right)^2}\\
      & < & (1+rs) \left( 1-\frac{1}{2} \left( \frac{r+s}{1+rs} \right)^2 \right)\\
      & = & 1+rs-\frac{1}{2}\frac{(r+s)^2}{1+rs}.
    \end{eqnarray*}

\end{proof}

By using the previous lemma we find lower bounds for the hyperbolic distance in terms of the Euclidean distance.

\begin{theorem}\label{lowerboundsinBn}
  For $x,y \in \Bn$ we have
  \begin{enumerate}
    \item[(1)] $\sinh\displaystyle \frac{\rho(x,y)}{2} \ge \frac{|x-y|}{\sqrt{\displaystyle 1+\frac{|x|^4+|y|^4}{2}-|x|^2-|y|^2}}$,
    \item[(2)] $\tanh \displaystyle \frac{\rho(x,y)}{4} \ge \displaystyle \frac{|x-y|}{1+|x||y|+\sqrt{1-|x|^2}\sqrt{1-|y|^2}} \ge \frac{|x-y|}{2}$,
    \item[(3)] $\tanh \displaystyle \frac{\rho(x,y)}{4} \ge \displaystyle \frac{|x-y|}{2-\left( \displaystyle \frac{|x-y|}{2} \right)^2}$,
    \item[(4)] $\tanh \displaystyle \frac{\rho(x,y)}{4} \ge \displaystyle \frac{|x-y|}{2- \displaystyle \frac{(|x|-|y|)^2}{2}} \ge \displaystyle \frac{|x-y|}{2- \displaystyle \frac{1}{2}\frac{(|x|-|y|)^2}{1-|x||y|}}$,
    \item[(5)] $\tanh \displaystyle \frac{\rho(x,y)}{4} \ge \displaystyle \frac{|x-y|}{1+ |x||y|+\sqrt{1+|x|^2|y|^2}- \displaystyle \frac{|x|^2+|y|^2}{2\sqrt{1+|x|^2|y|^2}}}$,
    \item[(6)] $\tanh \displaystyle \frac{\rho(x,y)}{4} \ge \displaystyle \frac{|x-y|}{2+2|x||y|- \displaystyle \frac{1}{2}\frac{(|x|+|y|)^2}{1+|x||y|}}$,
    \item[(7)] $\tanh \displaystyle \frac{\rho(x,y)}{4} \ge \displaystyle \frac{|x-y|}{\sqrt{|x-y|^2+4\sqrt{1-|x|^2}\sqrt{1-|y|^2}}}$.
  \end{enumerate}
\end{theorem}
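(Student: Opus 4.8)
The plan is to derive a single closed form for $\tanh(\rho(x,y)/4)$ and then obtain (2)--(7) from it by inserting the estimates of Lemma \ref{upperbounds}; item (1) will come directly from \eqref{hyperbolicB}. Write $P=(1-|x|^2)(1-|y|^2)>0$ and $A=A[x,y]=\sqrt{|x-y|^2+P}$ for the Ahlfors bracket. By \eqref{hyperbolicB} we have $\sinh\frac{\rho(x,y)}{2}=\frac{|x-y|}{\sqrt P}$, hence $\cosh\frac{\rho(x,y)}{2}=\sqrt{1+\frac{|x-y|^2}{P}}=\frac{A}{\sqrt P}$, and the half-angle identity $\tanh\frac\theta2=\frac{\sinh\theta}{1+\cosh\theta}$ applied with $\theta=\rho(x,y)/2$ gives the master formula
\[
  \tanh\frac{\rho(x,y)}{4}=\frac{|x-y|}{A+\sqrt P}\,.
\]
So each of (2)--(7) is equivalent to a suitable upper bound for the denominator $A+\sqrt P$. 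Item (1) stands apart: since $\sinh\frac{\rho(x,y)}{2}=\frac{|x-y|}{\sqrt P}$, and $P=(1-|x|^2)(1-|y|^2)\le\tfrac12\bigl((1-|x|^2)^2+(1-|y|^2)^2\bigr)=1+\tfrac12(|x|^4+|y|^4)-|x|^2-|y|^2$ by the arithmetic--geometric mean inequality, the radicand displayed in (1) dominates $\sqrt P$ and the claim follows at once.

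For the denominator $A+\sqrt P$ I use two elementary estimates. Expanding squares, $A^2=|x-y|^2+P=1+|x|^2|y|^2-2x\cdot y\le(1+|x||y|)^2$, so $A\le 1+|x||y|$; and $\sqrt P=\sqrt{(1-|x|^2)(1-|y|^2)}\le 1-|x||y|$, which is just $(|x|-|y|)^2\ge0$. Adding them gives $A+\sqrt P\le 2$, which is exactly the last inequality of (2). It also yields (7): from $A^2=|x-y|^2+P$ one gets $(A+\sqrt P)^2=|x-y|^2+2\sqrt P\,(\sqrt P+A)\le|x-y|^2+4\sqrt P$, so the denominator in (7) dominates $A+\sqrt P$. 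For the first inequality of (2) one only replaces $A$ by $1+|x||y|$ in the master formula, and $\sqrt P=\sqrt{1-|x|^2}\sqrt{1-|y|^2}$ reproduces the stated denominator.

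For (3)--(6) one adds $A\le 1+|x||y|$ to the appropriate line of Lemma \ref{upperbounds} (with $r=|x|$, $s=|y|$), treating the right-hand side of that line as the bound for $\sqrt P$: (3) and (4) come from the two successive estimates in Lemma \ref{upperbounds}(1), (5) from Lemma \ref{upperbounds}(2), and (6) from Lemma \ref{upperbounds}(3); the passage between the successive expressions displayed in (2) and (4) then uses only $0<1-|x||y|\le1$, $(|x|-|y|)^2\le|x-y|^2$ and $\sqrt P\le1-|x||y|$. The argument is mechanical once the master identity $\tanh(\rho/4)=|x-y|/(A+\sqrt P)$ (which rests on $A^2-|x-y|^2=(1-|x|^2)(1-|y|^2)$ together with the half-angle formula) and the bound $A\le1+|x||y|$ are in hand, so I expect no genuine obstacle; the one place to be careful is to keep straight, within Lemma \ref{upperbounds}(1), which of its two right-hand sides is the sharper, so that the chain of inequalities in (3)--(4) comes out in the correct order.
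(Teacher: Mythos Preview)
Your approach is essentially the same as the paper's: both reduce (3)--(6) to Lemma \ref{upperbounds} applied to $\sqrt{(1-|x|^2)(1-|y|^2)}$ after bounding $\tanh(\rho/4)$ from below by $|x-y|$ divided by $1+|x||y|+\sqrt{(1-|x|^2)(1-|y|^2)}$, and (1) comes from \eqref{hyperbolicB} together with the AM--GM bound on $(1-|x|^2)(1-|y|^2)$. The one substantive difference is that the paper imports the key starting inequality for (2), the first inequality of (4), and (7) from the references \cite{vu} and \cite{avv}, whereas you derive everything internally: you obtain the exact identity $\tanh(\rho/4)=|x-y|/(A+\sqrt{P})$ from \eqref{hyperbolicB} via the half-angle formula, then prove $A\le 1+|x||y|$ and $\sqrt P\le 1-|x||y|$ by expanding $A^2=1+|x|^2|y|^2-2x\cdot y$, and get (7) from the neat observation $(A+\sqrt P)^2=|x-y|^2+2\sqrt P\,(A+\sqrt P)\le |x-y|^2+4\sqrt P$. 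So your argument is a self-contained version of the paper's.
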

\begin{proof}
  (1) The assertion follows since $(1-|x|^2)(1-|y|^2) \le ((1-|x|^2)^2+(1-|y|^2)^2)/2 = 1+(|x|^4+|y|^4)/2-|x|^2-|y|^2$.

  \noindent (2) Follows from \cite[(2.27)]{vu} and \cite[(2.52 (3))]{vu}.

  \noindent (3)--(6). By \cite[(2.52 (3))]{vu}
  \[
    |x-y| \le (1+|x||y|+\sqrt{1-|x|^2}\sqrt{1-|y|^2} ) \tanh \frac{\rho(x,y)}{4}
  \]
  and the assertion follows from Lemma \ref{upperbounds}. For example
  \begin{eqnarray}
    |x-y| & \le & (1+|x||y|+\sqrt{1-|x|^2}\sqrt{1-|y|^2} ) \tanh \frac{\rho(x,y)}{4}\\
    & \le & \left( 1+|x||y|+1-\left( \frac{|x|+|y|}{2} \right)^2 \right) \tanh \frac{\rho(x,y)}{4}\\
    & = & \left( 2-\left( \frac{|x|-|y|}{2} \right)^2 \right) \tanh \frac{\rho(x,y)}{4}.
  \end{eqnarray}
  The first inequality of (4) follows from \cite[7.64 (24)]{avv}.

  \noindent (7) Follows from \cite[7.64 (25)]{avv}.
\end{proof}

\begin{remark}
  In Theorem \ref{lowerboundsinBn} are given various lower bounds for $\tanh (\rho(x,y)/4)$. Let us denote the better lower bound of Theorem \ref{lowerboundsinBn} (2) by $c_2$ and lower bounds of Theorem \ref{lowerboundsinBn} (3), (5) and (6) respectively by $c_3$, $c_5$ and $c_6$. It is possible to show that for all $x,y \in \Bn$
  \[
    c_6 \le c_5 \le c_3 \le c_2.
  \]
\end{remark}

Next we prove more lower bounds for the hyperbolic distance in $\Bn$.

\begin{lemma}\label{rhobound}
  Let $x,y \in \Bn$. Then
  \begin{eqnarray*}
    \rho(x,y) \ge \rho(x',y') & = & 2 \arcsinh \frac{|x-y|}{2 \left( {\sqrt{1+r^2}\sqrt{\displaystyle r^2-\frac{|x-y|^2}{4}}-r^2} \right) }\\
    & = & 2 \arctanh \left( (r+\sqrt{1+r^2})\tan \frac{\theta}{2} \right),
  \end{eqnarray*}
  where $x'$ and $y'$ are on the same geodesic joining $x$ and $y$ with $|x'-y'|=|x-y|$, $|x'|=|y'|$, $\theta = \measuredangle (0,a,x')$, $a \in \Rn$ and $r \ge 0$ such that the geodesic joining $x$ and $y$ is a subset of $S^{n-1}(a,r)$
\end{lemma}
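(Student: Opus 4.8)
The whole statement lives in a single $2$-plane. The hyperbolic geodesic through $x,y$ lies in the plane $P$ spanned by $0,x,y$, because reflection in $P$ is a hyperbolic isometry fixing $x$ and $y$ and hence fixing the unique geodesic through them; identifying $P$ with $\C$, this geodesic is the arc inside $\Bn$ of a circle $S^1(a,r)$ with $|a|^2=1+r^2$, by the orthogonality relation in Lemma \ref{orthogB2}. (If $0,x,y$ are collinear the geodesic is a diameter, the limiting case $r\to\infty$, which is checked separately.) After a rotation assume $a=\sqrt{1+r^2}>0$. I would then parametrize the part of $S^1(a,r)$ inside $\Bn$ by $z(\psi)=a-re^{i\psi}$, $|\psi|<\psi_0$ with $\cos\psi_0=r/a$, so that $|z(\psi)|^2=1+2r^2-2ar\cos\psi$, whence $1-|z(\psi)|^2=2r(a\cos\psi-r)>0$ — maximal at $\psi=0$, the point of the circle nearest the origin — and $|z'(\psi)|=r$. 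The hyperbolic line element along the geodesic is therefore $\frac{2|dz(\psi)|}{1-|z(\psi)|^2}=\frac{d\psi}{a\cos\psi-r}$. Writing $x=z(\psi_1)$, $y=z(\psi_2)$ with $\psi_1<\psi_2$, one has $|x-y|=2r\sin\frac{\psi_2-\psi_1}{2}$; since $x',y'$ are the symmetric points $z(\pm\theta)$ with $|x'-y'|=2r\sin\theta=|x-y|$, this forces $\psi_2-\psi_1=2\theta$, while $\theta=\measuredangle(0,a,x')$ is exactly the parametrizing angle.

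The inequality $\rho(x,y)\ge\rho(x',y')$ is then a rearrangement statement. Since the arc of $S^1(a,r)$ inside $\Bn$ is the hyperbolic geodesic,
\[
  \rho(x,y)=\int_{\psi_1}^{\psi_2}\frac{d\psi}{a\cos\psi-r},\qquad
  \rho(x',y')=\int_{-\theta}^{\theta}\frac{d\psi}{a\cos\psi-r},
\]
and the integrand is even and strictly increasing in $|\psi|$ on $(-\psi_0,\psi_0)$; among subintervals of fixed length $2\theta$ the integral is smallest for the centered one, since $\frac{d}{ds}\int_{s-\theta}^{s+\theta}(a\cos\psi-r)^{-1}\,d\psi=(a\cos(s+\theta)-r)^{-1}-(a\cos(s-\theta)-r)^{-1}>0$ for $s>0$. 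Alternatively one may bypass the line integral and compare via $\sinh\frac{\rho(u,v)}{2}=\frac{|u-v|}{\sqrt{1-|u|^2}\sqrt{1-|v|^2}}$: with $\psi_{1,2}=s\mp\theta$ one checks $(a\cos(s-\theta)-r)(a\cos(s+\theta)-r)=(a\cos\theta-r)^2-4a\sin^2\frac s2(a\cos^2\frac s2-r\cos\theta)\le(a\cos\theta-r)^2$, the last step using $a\cos^2\frac s2>\frac{a+r}{2}\ge r>r\cos\theta$ (from $\cos\psi_0=r/a$ and $a=\sqrt{1+r^2}>r$), so the denominator in $\sinh\frac{\rho(x',y')}{2}$ is at least that in $\sinh\frac{\rho(x,y)}{2}$.

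It remains to evaluate $\rho(x',y')$. From $\sinh\frac{\rho(x',y')}{2}=\frac{2r\sin\theta}{2r(a\cos\theta-r)}=\frac{\sin\theta}{a\cos\theta-r}$ and the substitutions $\sin\theta=\frac{|x-y|}{2r}$, $\cos\theta=\frac1r\sqrt{r^2-\frac{|x-y|^2}{4}}$, the radicals collapse (using $a=\sqrt{1+r^2}$) to give the first displayed formula. For the $\arctanh$-form I would apply the Weierstrass substitution $u=\tan\frac\psi2$ to $\int(a\cos\psi-r)^{-1}\,d\psi$; the orthogonality identity $a^2-r^2=1$ (hence $\frac{a+r}{a-r}=(a+r)^2$) turns the antiderivative into a multiple of $\arctanh\big((r+\sqrt{1+r^2})\tan\frac\psi2\big)$, and evaluating over $[-\theta,\theta]$ yields the second expression for $\rho(x',y')$; equivalently this follows from the first via $\tanh 2t=\frac{2\tanh t}{1+\tanh^2 t}$.

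The main obstacle is not conceptual — the geometric content is simply that the hyperbolic density is largest at the point of the orthogonal circle closest to $0$, so the symmetric sub-arc is the cheapest one of its chord length. The two places needing care are (i) the remark that $\rho(x,y)$ really equals the line integral over that arc, i.e. that the in-ball arc of the orthogonal circle is the geodesic, one of the facts recalled in the introduction, and (ii) the routine but slightly fiddly bookkeeping that matches the integral with the precise closed form claimed, where the identity $|a|^2=1+r^2$ must be used to eliminate the square roots.
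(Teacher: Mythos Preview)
Your argument is correct.  For the two explicit formulas you follow essentially the paper's computations (parametrize the orthogonal circle, use $1-|z(\psi)|^2=2r(a\cos\psi-r)$ together with $a^2=1+r^2$, then apply either the $\sinh$ formula or the Weierstrass substitution), so there is nothing to add there.

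For the inequality $\rho(x,y)\ge\rho(x',y')$, however, your route differs from the paper's.  The paper writes $\exp\rho(x,y)$ as the absolute ratio $|x_*,x,y,y_*|$, expands it trigonometrically as
\[
  f(\alpha)=\Big(\cos\beta+\frac{\sin\beta}{\tan\alpha}\Big)\Big(\cos\beta+\frac{\sin\beta}{\tan(c-\alpha)}\Big),
\]
where $2\beta$ is the fixed central angle subtended by the chord $[x,y]$ and $\alpha+\gamma=c$ is constant, and then checks by differentiation that the minimum occurs at $\alpha=\gamma$.  Your approach instead reads the inequality off the line integral: the density $\psi\mapsto (a\cos\psi-r)^{-1}$ along the geodesic is even and strictly increasing in $|\psi|$, so among sub-arcs of fixed angular length the centered one has least hyperbolic length.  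This rearrangement argument is cleaner and makes the geometric reason transparent (the density is smallest at the point of the orthogonal circle nearest the origin); the paper's cross-ratio computation, by contrast, avoids invoking that the orthogonal arc is a geodesic and works directly from \eqref{rhoabsratio}.  Your alternative via the product $(a\cos(s-\theta)-r)(a\cos(s+\theta)-r)$ is also fine and amounts to the same comparison done pointwise on the $\sinh$ formula.
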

\begin{proof}
  We prove first the inequality. Denote $2\alpha = \measuredangle (x_*,a,x)$, $2\beta = \measuredangle (x,a,y)$ and $2\gamma = \measuredangle (y_*,a,y)$, see Figure \ref{angles}. Now $\alpha+\beta+\gamma$ is constant and by \cite[(7.26)]{b1} and trigonometry
  \begin{eqnarray*}
    \exp \rho(x,y) & = & |x_*,x,y,y_*| = \frac{|x_*-y||x-y_*|}{|x_*-x||y-y_*|}\\
    & = & \frac{\sin(\alpha+\beta)}{\sin \alpha}\frac{\sin(\beta+\gamma)}{\sin \gamma}\\
    & = & \left( \frac{\sin \alpha \cos \beta+\cos \alpha \sin \beta}{\sin \alpha} \right) \left( \frac{\sin \beta \cos \gamma+\cos \beta \sin \gamma}{\sin \gamma} \right)\\
    & = & \left( \cos \beta + \frac{\sin \beta}{\tan \alpha} \right) \left( \cos \beta + \frac{\sin \beta}{\tan \gamma} \right).
  \end{eqnarray*}
  We fix $|x-y|$ and thus also $\beta$. Now $\gamma = c-\alpha$ for some positive constant $c < \pi/2$ and we consider the function
  \[
    f(\alpha) = \left( B + \frac{A}{\tan \alpha} \right) \left( B + \frac{A}{\tan (c-\alpha)} \right)
  \]
  for $\alpha \in (0,c)$, where $A = \sin \beta$ and $B = \cos \beta$. Clearly $f(\alpha) \to \infty$ as $\alpha \to 0$ or $\alpha \to c$. Since
  \[
    f'(\alpha) = \frac{A(B\sin c+A\cos c)}{\sin^2 \alpha \sin^2 (\alpha-c)}\sin (2\alpha-c)
  \]
  the function $f(\alpha)$ obtains its minimum at $\alpha = c/2$, which is equivalent to $\gamma = \alpha$. In other words, we obtain $\rho(x,y) \ge \rho(x',y')$.

  We prove next the first formula for $\rho(x',y')$. By selection of $x'$ and $y'$ it is clear that $\rho(x',y') = 2 \arcsinh (|x-y|/(1-|x'|^2))$ and thus we find $|x'|^2$. By the Pythagorean theorem we obtain that $|x'|^2 = (|a|-d)^2+|x-y|^2/4$ for $d = \sqrt{r^2-|x-y|^2/4}$. Since $|a| = \sqrt{1+r^2}$ we obtain
  \[
    |x'|^2 = 1+2 \left( r^2-\sqrt{r^2-\frac{|x-y|^2}{4}}\sqrt{1+r^2} \right).
  \]

  We prove finally the second formula for $\rho(x',y')$. Let $z$ be a point on the same geodesic with $x$ and $y$. For $\alpha = \measuredangle (0,a,z)$ we have
  \begin{eqnarray*}
    |z|^2 & = & (|a|-r \cos \alpha)^2+(r \sin \alpha)^2 = 1+r^2-2r \sqrt{1+r^2} \cos \alpha +r^2\\
    & = & 1+2r(r-\sqrt{1+r^2}\cos \alpha).
  \end{eqnarray*}
  Now
  \begin{eqnarray*}
    \rho(x',y') & = & 2 \int_0^\theta \frac{2r d\alpha}{1-|z|^2} = 2 \int_0^\theta \frac{2r d\alpha}{2r(\sqrt{1+r^2}\cos \alpha-r)}\\
    & = & 4 \arctanh \left( (r+\sqrt{1+r^2}) \tan \frac{\theta}{2} \right)
  \end{eqnarray*}
  and the assertion follows.
\end{proof}

\begin{figure}[ht!]
  \begin{center}
    \includegraphics[width=6cm]{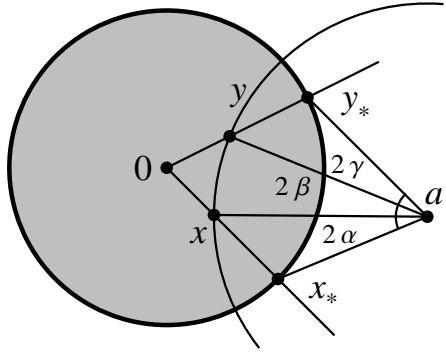}
    \caption{Angles $\alpha$, $\beta$ and $\gamma$ in the proof of Lemma \ref{rhobound}.\label{angles}}
  \end{center}
\end{figure}

\begin{lemma}
  For $x,y \in \Bn$ and $a$ as in Lemma \ref{orthogB2} we have
  \[
    \tanh \frac{\rho(x,y)}{4} \ge \frac{4-4|w|^2+|x-y|^2-\sqrt{(4-4|w|^2+|x-y|^2)^2-16|x-y|^2}}{4|x-y|},
  \]
  where
  \[
    w = a+\frac{|x-a|^2}{\left| (x+y)/2-a \right|^2} \left( \frac{x+y}{2}-a \right).
  \]
\end{lemma}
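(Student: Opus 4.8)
The plan is to write both sides in a common normal form. From \eqref{hyperbolicB} one has $\tanh\frac{\rho(x,y)}{2}=\frac{|x-y|}{A[x,y]}$, and applying the double–angle identity for $\tanh$ twice (with $A[x,y]^2-|x-y|^2=(1-|x|^2)(1-|y|^2)$) gives
\[
  \tanh\frac{\rho(x,y)}{4}=\frac{|x-y|}{A[x,y]+\sqrt{(1-|x|^2)(1-|y|^2)}}=\frac{4|x-y|}{4A[x,y]+\sqrt{16A[x,y]^2-16|x-y|^2}}.
\]
On the other hand, rationalising the numerator shows that the asserted lower bound equals $\dfrac{4|x-y|}{P+\sqrt{P^2-16|x-y|^2}}$ with $P=4(1-|w|^2)+|x-y|^2$. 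Since $t\mapsto\dfrac{4|x-y|}{t+\sqrt{t^2-16|x-y|^2}}$ is decreasing on $[\,4|x-y|,\infty)$, the whole lemma reduces to the single inequality $4A[x,y]\le 4(1-|w|^2)+|x-y|^2$, which in particular also forces $P\ge 4A[x,y]\ge 4|x-y|$ and hence makes the right-hand side of the lemma well defined.

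Next I would compute $1-|w|^2$ in closed form. We may assume $0$, $x$, $y$ are noncollinear (as in Lemma \ref{orthogB2}) and work in the plane they span, where $a$ and $w$ also lie. Put $s=(x+y)/2$. By Lemma \ref{orthoB}, applied with its triple $(x,y,z)$ taken to be $(a,x,y)$, the sphere $S^{n-1}(w,|x-w|)$ is orthogonal to the geodesic sphere $S^{n-1}(a,r)$ and $w-a=\frac{|x-a|^2}{|s-a|^2}(s-a)$; since $|x-a|=r$ and, by the Pythagorean theorem, $|s-a|^2=r^2-|x-y|^2/4$, this reads $w-a=\frac{r^2}{r^2-|x-y|^2/4}(s-a)$. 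Expanding $|w|^2=|a|^2+2a\cdot(w-a)+|w-a|^2$ with $|a|^2=1+r^2$ (Lemma \ref{orthogB2}) and $a\cdot x=(|x|^2+1)/2$, $a\cdot y=(|y|^2+1)/2$ (these follow from $|x-a|^2=|y-a|^2=r^2=|a|^2-1$), a direct simplification gives
\[
  1-|w|^2=\frac{r^2\bigl(|x-y|^2+2(1-|x|^2)+2(1-|y|^2)\bigr)}{4r^2-|x-y|^2}.
\]
I would then eliminate $r$ via Lemma \ref{orthogB2}: observing $\bigl||y|^2x-y\bigr|=|y|\,A[x,y]$ and $|x_1y_2-x_2y_1|=\sqrt{|x|^2|y|^2-(x\cdot y)^2}$, that lemma yields $r^2=\dfrac{|x-y|^2A[x,y]^2}{4(|x|^2|y|^2-(x\cdot y)^2)}$, hence $4r^2-|x-y|^2=\dfrac{|x-y|^2(1-x\cdot y)^2}{|x|^2|y|^2-(x\cdot y)^2}$ and $\dfrac{r^2}{4r^2-|x-y|^2}=\dfrac{A[x,y]^2}{(2-2x\cdot y)^2}$. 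Writing $\sigma=(1-|x|^2)+(1-|y|^2)$ and $C=\sigma+|x-y|^2=2-2x\cdot y$, this collapses to $1-|w|^2=\dfrac{A[x,y]^2(2\sigma+|x-y|^2)}{C^2}=\dfrac{A[x,y]^2(2C-|x-y|^2)}{C^2}$ (in particular $|w|<1$).

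It then remains to verify $4A[x,y]\le 4(1-|w|^2)+|x-y|^2$. Writing $u=|x-y|$ and $A=A[x,y]$ and clearing $C^2>0$, the difference $4(1-|w|^2)+u^2-4A$ is $C^{-2}$ times
\[
  8A^2C-4A^2u^2+u^2C^2-4AC^2=(C-2A)\bigl(u^2C+2Au^2-4AC\bigr),
\]
so it suffices to show this product is $\ge 0$. For the first factor, $C-2A\le 0$ is, after squaring, precisely $(|x|-|y|)^2\le u^2\le(|x|+|y|)^2$, i.e.\ the triangle inequality for the Euclidean triangle $0xy$; hence $C\le 2A$. For the second factor, $C\le 2A$ gives $C+2A\le 4A$, so $u^2(C+2A)\le 4Au^2<4AC$ since $u^2<C$ (because $\sigma>0$), i.e.\ $u^2C+2Au^2-4AC<0$. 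A nonpositive number times a negative number is $\ge 0$, which proves the reduced inequality and hence the lemma. The only laborious part is the closed-form evaluation of $|w|$ in the second step; once the identity $4(1-|w|^2)+u^2-4A=C^{-2}(C-2A)(u^2C+2Au^2-4AC)$ is in hand, the conclusion is immediate, the key observation being that $C-2A\le0$ is just the triangle inequality.
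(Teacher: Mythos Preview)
Your argument is correct, but it follows a quite different route from the paper. The paper's proof is short and geometric: it observes (via Lemma~\ref{orthoB}) that $w$ is the \emph{Euclidean} centre of the smallest hyperbolic ball $B_\rho(z,\rho(x,y)/2)$ containing $x$ and $y$, then uses the Euclidean--hyperbolic ball conversion \eqref{hypeucl} to express the Euclidean radius $s=|x-w|$ as a function of $T=\tanh(\rho(x,y)/4)$ and $|w|$ alone. The trivial bound $|x-y|\le 2s$ (Euclidean diameter), solved for $T$, immediately yields the stated lower bound. Your approach instead rewrites both sides in the common form $4|x-y|\big/(t+\sqrt{t^2-16|x-y|^2})$, reduces the lemma to the single scalar inequality $4A[x,y]\le 4(1-|w|^2)+|x-y|^2$, and then verifies this by computing $1-|w|^2$ in closed form from Lemmas~\ref{orthogB2} and \ref{orthoB} and factoring the difference as $(C-2A)(u^2C+2Au^2-4AC)/C^2$. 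The paper's argument is more conceptual and explains \emph{why} the inequality is natural (it is nothing but a diameter bound for a Euclidean ball), whereas your computation is longer but entirely self-contained and, as a by-product, produces an explicit expression for $|w|$ in terms of $|x|$, $|y|$, $x\cdot y$ that the paper does not record.
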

\begin{proof}
  The smallest hyperbolic ball $B_\rho$ with $x,y \in \partial B_\rho$ has the center point of the hyperbolic center point of $x$ and $y$. Thus by Lemma \ref{orthoB} $B_\rho = B^n(w,|x-w|)$. By \eqref{hypeucl}
  \[
    t=|x-w|=\frac{1+T^2+\sqrt{1+T^4+T^2(4|w|^2-2)}}{2T},
  \]
  where $T=\tanh ( \rho(x,y)/4 )$. Since $|x-y| \le 2t$ we obtain
  \[
    \frac{4-4|w|^2+|x-y|^2-\sqrt{(4-4|w|^2+|x-y|^2)^2-16|x-y|^2}}{4|x-y|} \le T
  \]
  which implies the assertion.
\end{proof}

\begin{lemma}
  For $x,y \in \Bn$ we have
  \[
    \tanh \frac{\rho(x,y)}{2} \ge \frac{|z|^2-1+\sqrt{1+|z|^4-|z|^2(2-|x-y|^2)}}{|x-y||z|^2},
  \]
  where $z$ is the hyperbolic midpoint of $x$ and $y$.
\end{lemma}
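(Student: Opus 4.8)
The plan is to realise the hyperbolic ball centred at the midpoint $z$ as a Euclidean ball via \eqref{hypeucl} and then read the bound off its diameter. First, since $z$ is the hyperbolic midpoint of $x$ and $y$, we have $\rho(x,z)=\rho(z,y)=\rho(x,y)/2$, so both $x$ and $y$ lie on the hyperbolic sphere $\partial B_\rho(z,\rho(x,y)/2)$. Applying \eqref{hypeucl} with centre $z$ and radius $\rho(x,y)/2$, this hyperbolic ball equals the Euclidean ball $B^n(w,s)$ with
\[
  s=\frac{(1-|z|^2)\,t}{1-|z|^2t^2},\qquad t=\tanh\frac{\rho(x,y)}{4},
\]
and the Euclidean centre $w$ will play no role. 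Consequently $x,y\in S^{n-1}(w,s)$, and hence $|x-y|\le 2s$.

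Next I would turn $|x-y|\le 2s$ into an inequality for $t$. Since $|z|<1$ and $t<1$ we have $1-|z|^2t^2>0$, so clearing the denominator yields $|x-y|(1-|z|^2t^2)\le 2(1-|z|^2)t$, i.e.
\[
  |x-y|\,|z|^2\,t^2+2(1-|z|^2)\,t-|x-y|\ge 0 .
\]
The left-hand side is a quadratic in $t$ with positive leading coefficient $|x-y|\,|z|^2$ and value $-|x-y|<0$ at $t=0$; it therefore has one negative and one positive root, and since $t\ge 0$ we must have $t\ge t_+$, where
\[
  t_+=\frac{-(1-|z|^2)+\sqrt{(1-|z|^2)^2+|x-y|^2|z|^2}}{|x-y|\,|z|^2}
     =\frac{|z|^2-1+\sqrt{1+|z|^4-|z|^2(2-|x-y|^2)}}{|x-y|\,|z|^2},
\]
the second equality being the identity $(1-|z|^2)^2+|x-y|^2|z|^2=1+|z|^4-|z|^2(2-|x-y|^2)$. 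This shows $\tanh(\rho(x,y)/4)\ge t_+$, and as $\tanh$ is increasing and $\rho(x,y)/2\ge\rho(x,y)/4$, the stated inequality $\tanh(\rho(x,y)/2)\ge t_+$ follows immediately.

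I do not expect a serious obstacle: once the geometric picture is set up the remainder is a one-line calculation. The only points deserving a remark are the degenerate cases $x=y$ and $z=0$, where the right-hand side has the indeterminate form $0/0$ and should be read as a limit (or these configurations simply excluded), and the verification that it is the positive root of the quadratic that matters, which is immediate from the sign pattern noted above. It is also worth observing that $|x-y|\le 2s$ is strict whenever $z\neq 0$ — the geodesic through $x$, $z$, $y$ meets $S^{n-1}(w,s)$ in $x$ and $y$ but does not pass through the Euclidean centre $w$ — so the bound obtained is in any case not sharp.
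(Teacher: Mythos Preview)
Your argument is correct and follows essentially the same route as the paper: apply \eqref{hypeucl} to the hyperbolic ball $B_\rho(z,\rho(x,y)/2)$, use $|x-y|\le 2s$, and solve the resulting quadratic for the positive root. The paper writes the parameter as $T=\tanh(\rho(x,y)/2)$ rather than your $t=\tanh(\rho(x,y)/4)$; with \eqref{hypeucl} as stated your choice is the accurate one, and your extra monotonicity step $\tanh(\rho/2)\ge\tanh(\rho/4)$ cleanly recovers the lemma (in fact you have proved the slightly sharper bound with $\rho(x,y)/4$ on the left).
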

\begin{proof}
  By \eqref{hypeucl}
  \[
    t=\frac{(1-|z|^2)T}{1-|z|^2 T^2},
  \]
  where $T=\tanh ( \rho(x,y)/2 )$. Since $|x-y| \le 2t$ we obtain
  \[
    \frac{|z|^2-1+\sqrt{1+|z|^4-|z|^2(2-|x-y|^2)}}{|x-y||z|^2} \le T
  \]
  which implies the assertion.
\end{proof}

\begin{lemma}
  For $x,y \in \Bn$ we have
  \[
    \sinh \frac{\rho(x,y)}{2} \ge \frac{|x-y|}{2 \sqrt{\sqrt{1+r^2}\sqrt{r^2-\delta^2}-r^2}},
  \]
  where $\delta = |x-y|/2$, $r = |x-a|$ and $a$ is the center of the hyperbolic geodesic joining $x$ and $y$.
\end{lemma}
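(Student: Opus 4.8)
The plan is to deduce the inequality directly from Lemma \ref{rhobound}. Write $u := \sqrt{1+r^2}\sqrt{r^2-\delta^2}-r^2$. Lemma \ref{rhobound} (applied with the same $a$, $r$, and $\delta=|x-y|/2$, using that $0,x,y$ are noncollinear so that $a$ is defined) already provides $\rho(x,y)\ge\rho(x',y')$ together with the evaluation $\rho(x',y') = 2\arcsinh\bigl(|x-y|/(2u)\bigr)$. Since $\sinh$ is increasing on $[0,\infty)$,
\[
  \sinh\frac{\rho(x,y)}{2}\;\ge\;\sinh\frac{\rho(x',y')}{2}\;=\;\frac{|x-y|}{2u},
\]
so it suffices to show $\dfrac{|x-y|}{2u}\ge\dfrac{|x-y|}{2\sqrt u}$, that is, $0<u\le 1$.

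For this I would use two facts established inside the proof of Lemma \ref{rhobound}. First, $1-|x'|^2 = 2u$; since $x'$ lies on the geodesic joining $x$ and $y$ and hence in $\Bn$, this gives $0<2u\le 1$, so $u\in(0,\tfrac12]$. A self-contained alternative avoids recalling the formula for $|x'|^2$: bounding $\sqrt{r^2-\delta^2}\le r$ yields $u\le r\sqrt{1+r^2}-r^2 = r/(\sqrt{1+r^2}+r)<\tfrac12$, while $u>0$ is equivalent to $(1+r^2)(r^2-\delta^2)>r^4$, i.e.\ $\delta^2<r^2/(1+r^2)$, which is precisely the condition $|x'|<1$ for the symmetric configuration to stay inside $\Bn$.

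Finally, for $u\in(0,1)$ one has $u<\sqrt u$, hence $1/u>1/\sqrt u>0$, and therefore
\[
  \sinh\frac{\rho(x,y)}{2}\;\ge\;\frac{|x-y|}{2u}\;\ge\;\frac{|x-y|}{2\sqrt u}\;=\;\frac{|x-y|}{2\sqrt{\sqrt{1+r^2}\sqrt{r^2-\delta^2}-r^2}},
\]
which is the assertion. I do not foresee a serious obstacle: essentially all the content beyond Lemma \ref{rhobound} is the bound $u\le 1$, i.e.\ the elementary estimate $r\sqrt{1+r^2}-r^2<\tfrac12$ (equivalently the geometric fact $|x'|<1$), so the new lemma is just a weaker, more symmetric-looking reformulation of the first formula of Lemma \ref{rhobound}. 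The only point that warrants care is confirming $u\le1$, since otherwise replacing $u$ by $\sqrt u$ in the denominator would not yield a valid lower bound.
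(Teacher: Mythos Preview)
Your argument is correct and follows the same underlying idea as the paper: both reduce to the symmetric pair $x',y'$ on the geodesic with $|x'|=|y'|$ and $|x'-y'|=|x-y|$, then use $\rho(x,y)\ge\rho(x',y')$. The paper recomputes $|x'|^2=\delta^2+(\sqrt{1+r^2}-\sqrt{r^2-\delta^2})^2$ and evaluates $\sinh^2(\rho(x',y')/2)=|x-y|^2/\bigl((1-|x'|^2)(1-|y'|^2)\bigr)$ directly; you instead cite Lemma~\ref{rhobound} as a black box and then append the estimate $u\le 1$.

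One point worth noting: since $1-|x'|^2=2u$, the product $(1-|x'|^2)(1-|y'|^2)$ equals $4u^2$, not $4u$ as written in the paper's final display. Thus the paper's own computation actually yields the stronger bound $\sinh(\rho(x,y)/2)\ge |x-y|/(2u)$, identical to Lemma~\ref{rhobound}, and the passage to $2\sqrt{u}$ in the denominator genuinely requires the extra step $u\le 1$ that you supply. Your observation that $u\le\tfrac12$ (from $|x'|\in[0,1)$, or equivalently from $r\sqrt{1+r^2}-r^2<\tfrac12$) is exactly what is needed, and it confirms your closing remark that the stated lemma is a strict weakening of the first formula in Lemma~\ref{rhobound}.
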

\begin{proof}
  Denote hyperbolic line through $x$ and $y$ by $C \subset S^1 (a,r)$. Consider points $x',y' \in C$ such that $\measuredangle (x,a,y) = \measuredangle (x',a,y')$ and $\measuredangle (0,a,y') = \measuredangle (x',a,y')/2$. Now
  \[
    |x'|^2 = |y'|^2 = \delta^2 + (\sqrt{1+r^2}-\sqrt{r^2-\delta^2})^2
  \]
  and thus
  \begin{eqnarray*}
    \sinh^2 \frac{\rho(x,y)}{2} & \ge &  \sinh^2 \frac{\rho(x',y')}{2} = \frac{|x'-y'|^2}{(1-|x'|^2)(1-|y'|^2)}\\
    & = & \frac{|x-y|^2}{4(\sqrt{1+r^2}\sqrt{r^2-\delta^2}-r^2)}.
  \end{eqnarray*}
\end{proof}



\section{Hyperbolic geometry in the half plane}

For convenient reference we record a well-known formula  which provides a connection
between the Euclidean and hyperbolic balls of $\H^n$ (see, e.g., \cite[(2.11)]{vu}) 
\begin{equation}\label{Brho}
  B_\rho(t e_n,r) = B^n(t e_n \cosh r,t \sinh r) \, \textrm{for} \quad r,t>0\,.
\end{equation}

The following result gives a formula for the hyperbolic geodesic segment in $\H^2$.

\begin{lemma}\label{geodesicH2}
  Let $x,y \in \H^2$ with $x_1 \neq y_1$. Then $S^1(c,r_c)$ is orthogonal to $\partial \H^2$, where
  \[
    c = \frac{|x|^2-|y|^2}{2(x_1-y_1)} \quad \textrm{and} \quad r_c = \sqrt{x_2^2+ \left( \frac{(x_1-y_1)^2+y_2^2-x_2^2}{2(x_1-y_1)} \right)^2 }.
  \]
\end{lemma}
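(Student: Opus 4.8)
The plan is to check, by a short direct computation, that the Euclidean circle $S^1(c,r_c)$ — whose center is the boundary point $(c,0)\in\partial\H^2$ — passes through both $x$ and $y$, and then to observe that orthogonality to $\partial\H^2$ is automatic once the center is seen to lie on the $x$-axis.

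First I would pin down the center. Any circle through $x$ and $y$ that is orthogonal to the $x$-axis must be centered on the $x$-axis, at some point $(c,0)$, and then $c$ is forced by the equidistance requirement $|x-(c,0)|=|y-(c,0)|$. Squaring both sides and cancelling $c^2$ gives $|x|^2-2cx_1=|y|^2-2cy_1$; since $x_1\neq y_1$ this solves uniquely for $c=(|x|^2-|y|^2)/(2(x_1-y_1))$, the stated value. Next I would compute the common distance $r_c=|x-(c,0)|=\sqrt{(x_1-c)^2+x_2^2}$: substituting the formula for $c$, the numerator of $x_1-c$ collapses to $(x_1-y_1)^2+y_2^2-x_2^2$ over $2(x_1-y_1)$, which reproduces exactly the claimed expression for $r_c$. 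Running the same simplification on $|y-(c,0)|$ gives the identical value, so $y$ lies on $S^1(c,r_c)$ as well.

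Finally, orthogonality to $\partial\H^2$ needs no argument beyond the remark that the $x$-axis passes through the center $(c,0)$ of $S^1(c,r_c)$: a line through the center of a circle meets that circle along radial directions, hence at right angles (here, at the two points $(c\pm r_c,0)$). Since $\partial\H^2$ is the $x$-axis, $S^1(c,r_c)$ is orthogonal to it, and by the uniqueness of the geodesic through two distinct points it is precisely the circle carrying the hyperbolic segment $[x,y]$. There is essentially no obstacle: the whole proof is routine algebra, and the only point worth flagging is that the hypothesis $x_1\neq y_1$ is exactly what keeps $c$ finite — geometrically, it excludes the degenerate case in which the geodesic is a vertical half-line rather than a half-circle.
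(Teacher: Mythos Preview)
Your proof is correct. The paper actually states this lemma without proof, treating it as an elementary fact about circles orthogonal to the real axis, so your direct computation---solving the equidistance condition for $c$, simplifying $x_1-c$ to obtain $r_c$, and invoking the fact that any line through the center of a circle meets it orthogonally---is exactly the kind of verification the authors leave to the reader.
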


\begin{lemma}
  Let $x,y \in \H^2$ with $x_2 < y_2$. Then
  \begin{eqnarray*}
    C & = & \{ z \in \R^2 \colon \rho(x,z)=\rho(y,z) \}\\
    & = & \left\{ z \in \R^2 \colon |x-z|=A|y-z|,\, A=\sqrt{x_2/y_2} \right\} = S^1(a,r_a),
  \end{eqnarray*}
  where
  \[
    a = \frac{x-A^2y}{1-A^2} \quad \textrm{and} \quad  r_a = \frac{A|x-y|}{1-A^2}.
  \]
  Moreover, the hyperbolic geodesic segment which contains $x $ and $y$ is orthogonal to $C$ and $c \in \partial \H^2$.
\end{lemma}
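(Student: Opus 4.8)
The plan is to run the argument of Corollary \ref{circlea2} in the half-plane model, i.e.\ using \eqref{hyperbolicH} in place of \eqref{hyperbolicB}. \emph{The two descriptions of $C$:} since $\cosh$ is strictly increasing on $[0,\infty)$, \eqref{hyperbolicH} gives, for $z\in\H^2$,
\[
  \rho(x,z)=\rho(y,z)\iff \frac{|x-z|^2}{2x_2z_2}=\frac{|y-z|^2}{2y_2z_2}\iff |x-z|^2=\frac{x_2}{y_2}\,|y-z|^2 ,
\]
and here the factor $z_2>0$ cancels, so the last condition is genuinely independent of $z$ and reads $|x-z|=A|y-z|$ with $A=\sqrt{x_2/y_2}$. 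Thus $C=\partial B_{x,y}^A$. As $x_2<y_2$ we have $A\in(0,1)$, so $C$ is an Apollonian circle and Lemma \ref{apollonian}, applied with base points $x,y$ and parameter $A$, gives $C=S^1(a,r_a)$ with $a=(x-A^2y)/(1-A^2)$ and $r_a=A|x-y|/(1-A^2)$.

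\emph{Orthogonality to $\partial\H^2$:} a Euclidean circle is orthogonal to the line $\partial\H^2=\R\times\{0\}$ exactly when its centre lies on that line, so it suffices to compute the second coordinate of $a$. Substituting $A^2=x_2/y_2$,
\[
  a_2=\frac{x_2-A^2y_2}{1-A^2}=\frac{x_2-x_2}{1-A^2}=0 ,
\]
so $a\in\partial\H^2$ and $C$ is orthogonal to $\partial\H^2$.

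\emph{Orthogonality to the geodesic $J$ through $x$ and $y$:} I would push the picture into the disk. Fix a M\"obius transformation $g$ of $\H^2$ onto $\B^2$ with $g^{-1}(0)\notin C$; this is possible since $g^{-1}(0)\in C\iff\rho(g(x),0)=\rho(g(y),0)\iff|g(x)|=|g(y)|$, and after possibly interchanging the roles of $x$ and $y$ we may assume $|g(y)|<|g(x)|$. Because $\rho$ is M\"obius invariant, $g(C)=\{w\in\B^2\colon\rho(g(x),w)=\rho(g(y),w)\}$, the arc $g(J)$ is the geodesic joining $g(x)$ and $g(y)$, and $g(\partial\H^2)=\partial\B^2$. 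By Corollary \ref{circlea2}, $g(C)$ is a circle orthogonal to $S^1(0,1)$ and to $g(J)$; applying the conformal map $g^{-1}$ shows $C$ is orthogonal to $\partial\H^2$ and to $J$, which finishes the proof. (One can instead copy the last paragraph of the proof of Corollary \ref{circlea2} verbatim in $\H^2$: with $z=C\cap J\cap\H^2$ the hyperbolic midpoint and $T$ a sense-preserving M\"obius self-map of $\H^2$ sending $z$ to a prescribed point, the identity $\rho(T(x),T(v))=\rho(T(y),T(v))$ for all $v\in C$ forces $T(C)$ to be the equidistant locus of $T(x),T(y)$, which crosses the geodesic $[T(x),T(y)]$ orthogonally.)

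I expect the only genuine bookkeeping point to be the side condition $|y|<|x|$ in Corollary \ref{circlea2}: one must choose $g$ so that $g(x)$ and $g(y)$ have different moduli. The tie $|g(x)|=|g(y)|$, in which $g(C)$ is a diameter of $\B^2$, is ruled out by the generic choice of $g$ above (or disposed of by the reflection symmetry exchanging $g(x)$ and $g(y)$). Everything else is contained in the two short displayed verifications.
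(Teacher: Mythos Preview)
Your proof is correct and follows essentially the same route as the paper: the paper's proof is the one-line remark that everything ``can easily be obtained by Lemma~\ref{apollonian}, Corollary~\ref{circlea2} and a M\"obius transformation $f(\Bn)=\Hn$,'' and you have simply written out those steps explicitly, deriving the Apollonian description of $C$ directly from \eqref{hyperbolicH}, reading off $(a,r_a)$ from Lemma~\ref{apollonian}, checking $a_2=0$ by hand, and then transferring the orthogonality to $J$ from Corollary~\ref{circlea2} via a M\"obius map. The only extra care you take beyond the paper is the bookkeeping about $|g(x)|\neq|g(y)|$, which is handled correctly.
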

%
%
\begin{proof}
  The assertion can easily be obtained by Lemma \ref{apollonian}, Corollary \ref{circlea2} and M\"obius trasformation $f(\Bn) = \Hn$.
\end{proof}

The following result gives the hyperbolic midpoint of two points $x$ and $y$.

\begin{lemma}\label{diamballH}
  For $x,y \in \H^2$ the smallest possible hyperbolic sphere that contains $x$ and $y$ is $B_\rho(z,\rho(x,y)/2)$ for
  \[
    z = \left( \frac{x_1 y_2+x_2 y_1}{x_2+y_2},\frac{\sqrt{x_2 y_2}\sqrt{(x_2+y_2)^2+(x_1-y_1)^2}}{x_2+y_2} \right).
  \]
\end{lemma}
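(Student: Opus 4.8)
The plan is first to reduce the statement to determining the hyperbolic midpoint $z$ of $x$ and $y$. A hyperbolic sphere $\partial B_\rho(w,R)$ contains both $x$ and $y$ precisely when $\rho(w,x)=\rho(w,y)=R$; hence by the triangle inequality $2R=\rho(w,x)+\rho(w,y)\ge\rho(x,y)$, with equality if and only if $w$ lies on the geodesic segment $[x,y]$ and $\rho(w,x)=\rho(w,y)$, that is, $w=z$ is the hyperbolic midpoint and $R=\rho(x,y)/2$. So it suffices to show that $z$ has the stated coordinates.

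Since the asserted formula is symmetric in $x$ and $y$, assume $x_2<y_2$. If $x_1=y_1$ the geodesic through $x,y$ is the vertical ray $\{x_1\}\times(0,\infty)$, on which $\rho((x_1,s),(x_1,t))=|\log(t/s)|$, so the midpoint is $(x_1,\sqrt{x_2y_2})$, in agreement with the formula; so assume also $x_1\ne y_1$. Then $z$ is the point of $\H^2$ lying on the geodesic circle $S^1(c,r_c)$ of Lemma~\ref{geodesicH2} and on the Apollonian circle $C=S^1(a,r_a)$ of the lemma preceding Lemma~\ref{diamballH}, where $A=\sqrt{x_2/y_2}$, and hence $a=(y_2x_1-x_2y_1)/(y_2-x_2)$ and $r_a=\sqrt{x_2y_2}\,|x-y|/(y_2-x_2)$. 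Both $a$ and $c$ are real, and by that lemma the two circles are orthogonal, so $(a-c)^2=r_a^2+r_c^2$; consequently $S^1(a,r_a)$ and $S^1(c,r_c)$ meet in the two points symmetric about $\partial\H^2$, of which $z$ is the one in $\H^2$.

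To locate $z$, equate the power of the real point $(z_1,0)$ with respect to $S^1(a,r_a)$ and to $S^1(c,r_c)$ and use orthogonality: this yields $z_1-a=r_a^2/(c-a)$, after which $z_2^2=r_a^2-(z_1-a)^2$ with $z_2>0$. It then remains to substitute the values of $a$, $r_a$, $c$ and simplify. The one computation that needs care is that the numerator of $c-a$, after clearing denominators, factors as $-(x_2+y_2)|x-y|^2$, so that $c-a=-(x_2+y_2)|x-y|^2/(2(x_1-y_1)(y_2-x_2))$; feeding this into $z_1=a+r_a^2/(c-a)$ the factors $y_2-x_2$ and $x_1-y_1$ cancel and one obtains $z_1=(x_1y_2+x_2y_1)/(x_2+y_2)$. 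Then $z_2^2=r_a^2-(z_1-a)^2$, together with $(x_2+y_2)^2-4x_2y_2=(x_2-y_2)^2$ and $|x-y|^2=(x_1-y_1)^2+(x_2-y_2)^2$, gives $z_2^2=x_2y_2\left((x_2+y_2)^2+(x_1-y_1)^2\right)/(x_2+y_2)^2$, which is the claimed value (the remaining degenerate case $x_2=y_2$, where $C$ is the vertical line $\{(x_1+y_1)/2\}\times\R$, is settled the same way). The main obstacle is thus only the algebraic bookkeeping of this last step — there is no conceptual difficulty once the two auxiliary circles are in hand; alternatively, one could conjugate by a M\"obius self-map of $\H^2$ sending the endpoints of the geodesic to $0$ and $\infty$, reduce to the vertical case, and pull back, but the algebra is of comparable length.
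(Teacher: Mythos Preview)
Your argument is correct and follows essentially the same line as the paper: both determine $z$ as the intersection in $\H^2$ of the geodesic circle $S^1(c,r_c)$ (Lemma~\ref{geodesicH2}) with the equidistance locus $\sqrt{x_2}\,|y-z|=\sqrt{y_2}\,|x-z|$, and then solve for $z_1$ and $z_2$. The paper is terser --- it simply substitutes $z_2=\sqrt{r^2-(c-z_1)^2}$ into the equidistance condition and asserts the resulting value of $z_1$ --- whereas you additionally (i) justify that the smallest containing sphere is indeed $B_\rho(z,\rho(x,y)/2)$ via the triangle inequality, (ii) treat the vertical case $x_1=y_1$ separately, and (iii) exploit the orthogonality established in the lemma preceding Lemma~\ref{diamballH} to obtain the clean relation $z_1-a=r_a^2/(c-a)$ before doing the algebra. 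These extras make your write-up more complete than the paper's, but the underlying route is the same.
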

\begin{proof}
  The geodesic that contains $x$ and $y$ is $S^1(c,r) \cap \H^2$ for
  \[
    c=\frac{x_1^2+x_2^2-y_1^2-y_2^2}{2(x_1-y_1)} \quad \textrm{and} \quad r=\sqrt{x_2^2+ \left( \frac{(x_1-y_1)^2+y_2^2-x_2^2}{2(x_1-y_1)} \right)^2 }.
  \]
  Since $|z-c|^2 = r^2$ we obtain $z_2 = \sqrt{r^2-(c-z_1)^2}$. Because $\rho(x,z) = \rho(z,y)$, which is equivalent to $\sqrt{x_2}|y-z|=\sqrt{y_2}|x-z|$, we obtain
  \[
    z_1 = \frac{x_1 y_2+x_2 y_1}{x_2+y_2}
  \]
  and the assertion follows.
\end{proof}

\begin{figure}[ht!]
  \begin{center}
    \includegraphics[width=10cm]{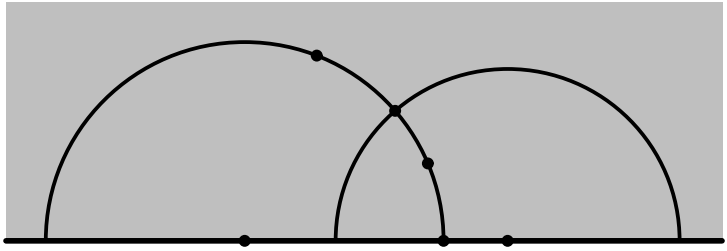}
    \caption{The hyperbolic midpoint in the upper half plane.}
  \end{center}
\end{figure}

\section{Hyperbolic distance in the half space}

We prove lower bounds for the hyperbolic distance in $\Hn$.

\begin{lemma}\label{rhoH1}
  For $x,y \in \Hn$ we have
  \[
    \cosh \rho(x,y) \ge 1+\frac{|x-y|^2}{x_n^2+y_n^2}.
  \]
\end{lemma}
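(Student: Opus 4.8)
The plan is to reduce the claimed inequality to the elementary fact $2x_ny_n \le x_n^2+y_n^2$ by invoking the exact formula for the hyperbolic distance in $\Hn$. First I would recall \eqref{hyperbolicH}, namely
\[
  \cosh \rho(x,y) = 1+\frac{|x-y|^2}{2x_ny_n},
\]
which holds for all $x,y \in \Hn$ and in particular gives an exact expression to compare against the asserted lower bound.

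Next I would observe that $x_n,y_n>0$, so both $2x_ny_n$ and $x_n^2+y_n^2$ are positive; since also $|x-y|^2 \ge 0$, replacing the denominator by a larger quantity can only decrease the fraction. By the arithmetic–geometric mean inequality (equivalently, $(x_n-y_n)^2 \ge 0$) we have $2x_ny_n \le x_n^2+y_n^2$, hence
\[
  \frac{|x-y|^2}{2x_ny_n} \ge \frac{|x-y|^2}{x_n^2+y_n^2}.
\]
Adding $1$ to both sides and using the identity above yields $\cosh\rho(x,y) \ge 1+|x-y|^2/(x_n^2+y_n^2)$, which is the assertion.

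There is essentially no obstacle here: the only point requiring a moment's care is the direction of the inequality between the two denominators and the fact that the numerator is nonnegative, so that shrinking the denominator indeed increases the quotient. I would also remark, for completeness, that equality holds precisely when $x_n=y_n$, i.e. when $x$ and $y$ lie at the same Euclidean height; in that special case the bound in the lemma coincides with the exact value of $\cosh\rho(x,y)$.
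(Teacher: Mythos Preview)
Your proof is correct and follows exactly the paper's approach: invoke the identity \eqref{hyperbolicH} and then use $2x_ny_n \le x_n^2+y_n^2$ to bound the denominator. The paper's proof is the one-line version of what you wrote; your added remarks on positivity and the equality case $x_n=y_n$ are fine but not needed.
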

\begin{proof}
  The assertion follows since $2x_ny_n \le x_n^2+y_n^2$.
\end{proof}


\begin{lemma}\label{rhoH2}
  For $x,y \in \Hn$ we have
  \[
    \cosh \rho(x,y) \ge 1+\frac{2|x'-y'|^2}{(x_n+y_n)^2},
  \]
  where $x' = x-e_nx_n$ and $y' = y-e_ny_n$.
\end{lemma}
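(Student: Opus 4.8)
The plan is to reduce the inequality to the exact identity \eqref{hyperbolicH} together with two completely elementary estimates. First I would decompose the displacement vector into its tangential and normal parts: writing $x-y = (x'-y') + (x_n-y_n)e_n$ and noting that $x'-y'$ lies in the boundary hyperplane $\partial\Hn = \{z \colon z_n = 0\}$ and is therefore orthogonal to $e_n$, the Pythagorean theorem gives
\[
  |x-y|^2 = |x'-y'|^2 + (x_n-y_n)^2 \ge |x'-y'|^2 .
\]

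Second, I would apply the arithmetic--geometric mean inequality to the positive numbers $x_n$ and $y_n$: since $(x_n-y_n)^2 \ge 0$ we get $4x_n y_n \le (x_n+y_n)^2$, which is the same as
\[
  \frac{1}{2x_n y_n} \ge \frac{2}{(x_n+y_n)^2}.
\]
Combining the two displays with \eqref{hyperbolicH} yields
\[
  \cosh\rho(x,y) = 1 + \frac{|x-y|^2}{2x_n y_n} \ge 1 + \frac{|x'-y'|^2}{2x_n y_n} \ge 1 + \frac{2|x'-y'|^2}{(x_n+y_n)^2},
\]
which is exactly the asserted bound.

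There is no genuine obstacle here; the only point to monitor is the consistency of the inequality directions, since both weakening steps act on the right-hand side of \eqref{hyperbolicH} (we discard the nonnegative normal term $(x_n-y_n)^2$ and we replace the harmonic-type denominator $2x_ny_n$ by the larger $\tfrac12(x_n+y_n)^2$). The argument runs in the same spirit as Lemma \ref{rhoH1}, the difference being that here we keep only the tangential separation $|x'-y'|$ in the numerator while trading the denominator for one that depends symmetrically on $x_n+y_n$; equality throughout forces $x_n = y_n$ and $x_n = y_n$ again (from the two AM--GM steps), i.e.\ $x_n = y_n$, in which case $|x-y| = |x'-y'|$ as well.
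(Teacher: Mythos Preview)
Your proof is correct and follows essentially the same route as the paper: both reduce the claim, via \eqref{hyperbolicH}, to the elementary inequality $\dfrac{2|x'-y'|^2}{(x_n+y_n)^2} \le \dfrac{|x-y|^2}{2x_n y_n}$. The only cosmetic difference is that you verify this in two steps (drop the normal term $(x_n-y_n)^2$, then apply AM--GM to the denominator), whereas the paper clears denominators in one move and observes that the resulting expression factors as $(x_n-y_n)^2\bigl(|x'-y'|^2+(x_n+y_n)^2\bigr)\ge 0$.
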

\begin{proof}
  We need to show that
  \[
    \frac{2|x'-y'|^2}{(x_n+y_n)^2} \le \frac{|x-y|^2}{2 x_n y_n},
  \]
  which is equivalent to $(x_n-y_n)^2(|x'-y'|^2+(x_n+y_n)^2) \ge 0$ and the assertion follows.
\end{proof}

It is natural to ask which one of the above lemmas gives better lower bound for $\rho(x,y)$. We need to find out when the inequality
\begin{equation}\label{comparison}
\frac{|x-y|^2}{x_n^2+y_n^2} \le \frac{2|x'-y'|^2}{(x_n+y_n)^2}
\end{equation}
holds. Since \eqref{comparison} is equivalent to $x_n+y_n \le |x'-y'|$ we obtain that the lower bound of Lemma \ref{rhoH2} is better than the lower bound of Lemma \ref{rhoH1} whenever $x_n+y_n \le |x'-y'|$.

\begin{lemma}
  For $x,y \in \Hn$ we have
  \begin{eqnarray*}
    \sinh \frac{\rho(x,y)}{2} & \ge & \frac{x_n+y_n}{2\sqrt{x_n y_n}} \sqrt{1-\frac{4x_n y_n}{(x_n+y_n)^2+|x'-y'|^2}}\\
    & \ge & \frac{(x_n+y_n)|x'-y'|}{2\sqrt{x_n y_n}{\sqrt{(x_n+y_n)^2+|x'-y'|^2}}},
  \end{eqnarray*}
  where $x' = x-e_nx_n$ and $y' = y-e_ny_n$.
\end{lemma}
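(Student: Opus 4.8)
The plan is to reduce everything to the closed form of $\sinh(\rho(x,y)/2)$. From \eqref{hyperbolicH} and the identity $\cosh\rho = 1+2\sinh^2(\rho/2)$ we obtain
\[
  \sinh\frac{\rho(x,y)}{2} = \frac{|x-y|}{2\sqrt{x_n y_n}},
\]
so both asserted inequalities are purely Euclidean statements involving $|x-y|$, $x_n$, $y_n$ and $|x'-y'|$. I also record the orthogonal splitting $|x-y|^2 = |x'-y'|^2 + (x_n-y_n)^2$, which follows immediately from $x'=x-e_nx_n$ and $y'=y-e_ny_n$.

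The next step is to rewrite the middle quantity. Since $(x_n+y_n)^2 - 4x_ny_n = (x_n-y_n)^2$, we have
\[
  (x_n+y_n)^2\left(1-\frac{4x_ny_n}{(x_n+y_n)^2+|x'-y'|^2}\right)
  = \frac{(x_n+y_n)^2\bigl((x_n-y_n)^2+|x'-y'|^2\bigr)}{(x_n+y_n)^2+|x'-y'|^2}
  = \frac{(x_n+y_n)^2\,|x-y|^2}{(x_n+y_n)^2+|x'-y'|^2}.
\]
In particular the radicand appearing in the statement is nonnegative, because $(x_n+y_n)^2\ge 4x_ny_n$ by the arithmetic--geometric mean inequality, so the middle expression is well defined and equals $\dfrac{(x_n+y_n)\,|x-y|}{2\sqrt{x_ny_n}\,\sqrt{(x_n+y_n)^2+|x'-y'|^2}}$.

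With this reduction, both inequalities are one-liners. Cancelling the common positive factor $|x-y|/(2\sqrt{x_ny_n})$ (the degenerate case $x=y$ being trivial), the first inequality becomes $\sqrt{(x_n+y_n)^2+|x'-y'|^2}\ge x_n+y_n$, i.e. $|x'-y'|^2\ge 0$. Cancelling the common positive factor $(x_n+y_n)\big/\bigl(2\sqrt{x_ny_n}\,\sqrt{(x_n+y_n)^2+|x'-y'|^2}\bigr)$, the second inequality becomes $|x-y|\ge |x'-y'|$, which is immediate from $|x-y|^2=|x'-y'|^2+(x_n-y_n)^2$. There is no real obstacle here: the entire argument rests on the identity $(x_n+y_n)^2-4x_ny_n=(x_n-y_n)^2$ together with the Pythagorean decomposition of $|x-y|^2$, and the only point needing a word of care is the nonnegativity of the radicand, which AM--GM supplies.
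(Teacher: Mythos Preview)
Your proof is correct. The paper takes a more geometric route: it invokes the hyperbolic midpoint $z$ of $x$ and $y$ from Lemma~\ref{diamballH}, uses \eqref{Brho} to identify the hyperbolic ball $B_\rho(z,\rho(x,y)/2)$ with a Euclidean ball of radius $z_n\sinh(\rho(x,y)/2)$, and bounds $|x-y|$ by its Euclidean diameter $2z_n\sinh(\rho(x,y)/2)$; substituting the explicit formula for $z_n$ then gives the middle expression. You bypass this machinery by reading off the exact identity $\sinh(\rho(x,y)/2)=|x-y|/(2\sqrt{x_ny_n})$ directly from \eqref{hyperbolicH}, after which both inequalities collapse to the trivial Euclidean facts $|x'-y'|^2\ge 0$ and $|x-y|\ge|x'-y'|$. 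Your argument is shorter and self-contained, while the paper's approach exposes the geometric meaning of the middle term as $|x-y|/(2z_n)$ with $z$ the hyperbolic midpoint. As a byproduct, your computation shows that the first inequality is actually an equality precisely when $x'=y'$.
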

\begin{proof}
  By \eqref{Brho} we have $B_\rho(t e_n,r) = B^n((t \cosh r)e_n,t \sinh r)$ implying $|x-y| \le 2t \sinh r$ for all $x,y \in \partial B_\rho(t e_n,r)$. Therefore, by Lemma \ref{diamballH} we have for all $x,y \in \Hn$ that
  \[
    |x-y| \le 2 \frac{\sqrt{x_n y_n}\sqrt{(x_n+y_n)^2+|x'-y'|^2}}{x_n+y_n}\sinh \frac{\rho(x,y)}{2}
  \]
  which is equivalent to
  \begin{eqnarray*}
    \rho(x,y) & \ge & 2\arcsinh \left( \frac{x_n+y_n}{2\sqrt{x_n y_n}} \sqrt{\frac{(x_n-y_n)^2+|x'-y'|^2}{(x_n+y_n)^2+|x'-y'|^2}} \right)\\
    & = & 2\arcsinh \left( \frac{x_n+y_n}{2\sqrt{x_n y_n}} \sqrt{1-\frac{4x_n y_n}{(x_n+y_n)^2+|x'-y'|^2}} \right)\\
    & \ge & 2\arcsinh \frac{(x_n+y_n)|x'-y'|}{2\sqrt{x_n y_n}{\sqrt{(x_n+y_n)^2+|x'-y'|^2}}}
  \end{eqnarray*}
  and the assertion follows.
\end{proof}


{\small

} 

\bigskip
\noindent
{\sc
Riku Kl\'en and Matti Vuorinen}\\
Department of Mathematics and Statistics\\
University of Turku\\
20014 Turku\\
Finland\\
ripekl@utu.fi,   vuorinen@utu.fi\\

\end{document}